\long\def\symbolfootnote[#1]#2{\begingroup%
\def\thefootnote{\fnsymbol{footnote}}\footnote[#1]{#2}\endgroup}
\def\imod#1{\allowbreak\mkern10mu({\operator@font mod}\,\,#1)}
\newtheorem{theorem}{Theorem}[section]
\newtheorem{proposition}[theorem]{Proposition}
\newtheorem{lemma}[theorem]{Lemma}
\newtheorem{remark}[theorem]{Remark}
\newcommand{\Z}{\mbox{$\mathbb Z$}}
\newcommand{\F}{\mbox{$\mathbb F$}}  
\newcommand{\N}{\mbox{$\mathbb N$}}     % For Natural numbers
\newcommand{\R}{\mbox{$\mathbb R$}}     % For Real numbers
\def\remark{\paragraph{\bf Remark}}
\newcommand{\Or}{\operatorname{O}}
\newcommand{\charac}{\operatorname{char}}
\newcommand{\rad}{\operatorname{rad}}
\newcommand{\Arf}{\operatorname{Arf}}
\newcommand{\Quad}{\operatorname{Quad}}
\newcommand{\Hom}{\operatorname{Hom}}
\begin{document}
\setcounter{section}{0}
\title{Strongly real special $2$-groups}

\author{Dilpreet Kaur}
\address{Indian Institute of Science Education and Research, Knowledge City, Sector-81, Mohali 140306 INDIA.}
\thanks{The first named author gratefully acknowledges the Junior Research Fellowship from Council of Scientific \& Industrial Research, India
for the financial support during the period of this work.}
\email{dilpreetkaur@iisermohali.ac.in}

\author{Amit Kulsherstha}
\address{Indian Institute of Science Education and Research, Knowledge City, Sector-81, Mohali 140306 INDIA.}
\email{amitk@iisermohali.ac.in}

\date{}
\subjclass[2000]{20C15, 20C33}
\keywords{Strongly real, totally orthogonal}
\begin{abstract}
Strongly real groups and totally orthogonal groups form two important subclasses of real groups.
In this article we give a characterization of strongly real special $2$-groups.
This characterization is in terms of quadratic maps over fields of characteristic $2$.
We then provide examples of groups which are in one subclass and not the other. It is a conjecture of Tiep that such examples
are not possible for finite simple groups.
\end{abstract}

\maketitle

\section*{Introduction}\label{introduction}
Groups in which all elements belong to the conjugacy class of their inverses are called {\it real groups}.
It is well known that all entries in the character tables of finite real groups are real \cite{SerreRepnBook}.
However real groups may admit complex representations which are not realizable over
$\R$. Such representations are called {\it symplectic}. 
The complex representation
$$1 \mapsto \left(\begin{array}{cc} 1 & 0 \\ 0 & 1 \end{array}\right), 
~~a \mapsto \left(\begin{array}{cc} i & 0 \\ 0 & -i \end{array}\right),
~~b \mapsto \left(\begin{array}{cc} 0 & 1 \\ -1 & 0 \end{array}\right),
~~c \mapsto \left(\begin{array}{cc} 0 & 1 \\ i & 0 \end{array}\right)$$
of the Quaternion group $Q_2 = \langle -1, a, b, c : (-1)^2 = 1, a^2 = b^2 = c^2 = abc = -1 \rangle$
is symplectic.
Real groups which do not admit any symplectic representation are called {\it totally orthogonal}.
In 1985, Gow proved that groups $\Or_n(q)$ of special isometries of quadratic forms
are totally orthogonal \cite{GowOrthogonal}. It was already known by then that
every element of $\Or_n(q)$ is strongly real \cite{WonenburgerOrthogonal}. {\it Strongly real} elements in a group are
those which can be expressed as a product of two self-inverses. 
A group is called {\it strongly real}
if all its elements are strongly real. 
It is straightforward to see that a group $G$ is strongly real if for every element $x$ of $G$, there exists an element $y$
in $G$ such that $y^2 = 1$ and $x^{-1}=y^{-1}xy$. \\

Like $\Or_n(q)$ there are plenty of groups which are both
totally orthogonal and strongly real. 
According to a conjecture of Tiep finite simple groups are totally orthogonal if and only if they are strongly real.
However there is no general class of groups known which exhibits one of these properties and not the other.
In this article we exhibit an infinite class of groups, namely the class of special $2$-groups,
for which none of the notions of strongly reality and total orthogonality imply the other. 
This generalizes an example of a strongly real group admitting a symplectic representation \cite{AmitAnupam}. \\

The plan of the article is as follows: In \S 1 we recall basics on quadratic forms over $\F_2$, where $\F_2$ denotes the field with two elements. Then in \S 2
we record special $2$-groups, quadratic maps, second cohomology groups and their interconnection. The understanding of
this interconnection gives us a characterization of strongly real special groups in terms of their associated quadratic maps. 
This characterization is used in next section to show that all extraspecial $2$-groups, except the quaternion group $Q_2$, 
are strongly real. This requires a classification of extraspecial $2$-groups as central products of $D_4$ and $Q_2$, where $D_4$ denotes
the dihedral group of order $8$ \cite{GorensteinBook}.
In the same section we convert this classification in the language of quadratic forms over $\F_2$.
In last section we give examples of groups which are strongly real but not totally orthogonal, and vice-versa. 
A characterization of strongly real groups (Th. \ref{strongly-real-criterion}) 
and a characterization of totally orthogonal groups \cite[Th. 3.5]{ObedPaper}
play key roles in the construction of these examples.

\section{Quadratic forms in characteristic $2$}
Let $\F$ be a field of characteristic two and $V$ be an $n$ dimensional vector space over $\F$. 
A map $b: V\times V\rightarrow \F$ is called {\it $\F$-bilinear} if it satisfies the following properties:
\begin{enumerate}
  \item $b(\alpha v_1+\beta v_2, w)=\alpha b(v_1,w)+\beta b(v_2,w)$ for all $v_1, v_2, w \in V$
and $\alpha, \beta \in \F$.
  \item $b(v, \alpha w_1+\beta w_2)=\alpha b(v,w_1)+\beta b(v,w_2)$ for all $v, w_1, w_2 \in V$
and $\alpha, \beta \in \F$
\end{enumerate}
A map $q:V\rightarrow \F$ is called {\it quadratic form} if 
\begin{enumerate}
  \item $q(\alpha v)=\alpha^2 q(v)$ for all $v \in V$
  \item The map $b_q: V\times V\rightarrow \F$ given by $b_q(v,w):=q(v+w)-q(v)-q(w)$ is $\F$-bilinear.
\end{enumerate}
For a quadratic form $q$ the map $b_q$ is called the {\it polar map} of $q$. The pair $(V,q)$ is called
the {\it quadratic space} over $\F$. A bilinear map $b_q$ is called {\it alternative} if $b_q(v,v)=0$
for all $v \in V$.\\

If $B = \{e_1,e_2,\cdots,e_n\}$ is a basis of $V$ then any $n \times n$ matrix $Q$ satisfying 
$q(x)=x^t Qx$, where $x\in V$ is indeterminate column vector and $x^t$ denotes the transpose of $x$,
is called a {\it matrix of $q$ with respect to basis $B$}. 
%Such a matrix always exists. For example
%  $Q:=(q_{ij})$
% \begin{equation}
%  q_{ij}=\left\{
% {1, 1}
% q(e_i) &
% \quad
% i=j\\
% b_q(e_i,e_j) &
% \quad
% i<j\\
% 0 &
% \quad
% i<j\\
% \end{array}
% \right\}
% }
% \end{equation}.
Every matrix of $q$ with respect to same basis is of form $Q + A$, where $A$ is an alternating matrix and 
$Q$ is the unique upper triangular matrix of $q$ with respect to basis $B$. If we change the basis 
and $T$ is transition matrix for the change of basis then 
upper triangular matrix of $q$ with respect to new basis is $T^t QT$. \\

Two $n$-dimensional quadratic spaces $(V_1, q_1)$ and $(V_2, q_2)$ over $\F$ are said to be 
{\it isometric} if there exists an $\F$-linear isomorphism $T:V_1\rightarrow V_2$ such that 
$q_1(v)=q_2(T(v))$ for all $v\in V$. Isometry between two quadratic spaces is denoted by 
$(V_1, q_1)\cong(V_2, q_2).$ 
A quadratic space $(V, q)$ is called the {\it orthogonal sum} of $(V_1, q_1)$ and $(V_2, q_2)$ if 
$V = V_1\oplus V_2$ and $q(v)=q_1(v_1)+q_2(v_2)$, where $v = (v_1, v_2) \in V$ is an arbitrary element of
$V$. In this case we write $q=q_1 \bot q_2$. Conversely, let $(V,q)$ be a quadratic space and $V_i,~1 \leq i\leq m$ be subspaces of $V$ such that $V = V_1\oplus \cdots \oplus V_m$ and $b_q(v_i,v_j) = 0$ for $v_i\in V_i, v_j\in V_j, i\neq j$. Then $q = q_1 \bot \cdots \bot q_m$ where $q_i$ denotes the restriction of $q$ to $V_i$. \\
 
The subspace $\rad(V) := \{w\in V : b_q(v,w)=0 ~\forall v\in V\}$ is called the {\it radical} of $(V,q)$. The quadratic space $(V,q)$ is called {\it regular} if $\rad(V)=0$. \\

The following theorem is analogous to the diagonalization of quadratic spaces over the field of characteristic different from $2$.\\
\begin{theorem}[\cite{Pfister}]\label{quad-form-decompo-U-and-rad(V)}
Every quadratic space $(V,q)$ has an orthogonal decomposition $V = U \oplus \rad(V)$ such that $(U,q|_U)$ is regular and an orthogonal 
sum of $2$-dimensional regular quadratic spaces, where $(\rad(V), q|_{\rad(V)})$ is an orthogonal sum of $1$-dimensional quadratic spaces.
\end{theorem}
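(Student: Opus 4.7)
My plan is to prove the theorem in three stages: first splitting off the radical, then decomposing the regular part into $2$-dimensional blocks, and finally handling the radical.

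\textbf{Splitting off the radical.} Since $\rad(V)$ is a subspace of $V$, I would choose any vector-space complement $U$, so that $V = U \oplus \rad(V)$. Because $b_q(u, r) = 0$ for every $u \in U$ and $r \in \rad(V)$, the decomposition is automatically orthogonal, and the identity $q(u+r) = q(u) + q(r) + b_q(u,r) = q(u) + q(r)$ shows $q = q|_U \perp q|_{\rad(V)}$. To see that $(U, q|_U)$ is regular, I would take $u \in \rad(U)$ and observe that for any $v = u' + r \in V$ we get $b_q(u, v) = b_q(u, u') + b_q(u, r) = 0$, so $u \in U \cap \rad(V) = 0$.

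\textbf{Decomposing the regular part.} On $U$, the polar form $b_q$ is alternating, since in characteristic $2$ one has $b_q(v,v) = q(2v) - 2q(v) = 0$. I would induct on $\dim U$. If $\dim U > 0$, pick any nonzero $v \in U$; regularity of $U$ yields $w \in U$ with $b_q(v,w) \neq 0$, and after scaling we may assume $b_q(v,w) = 1$. Let $W = \langle v, w \rangle$, which is $2$-dimensional and regular because the restricted polar form has matrix $\bigl(\begin{smallmatrix} 0 & 1 \\ 1 & 0 \end{smallmatrix}\bigr)$. For any $u \in U$, setting $u' = u - b_q(u,w)v - b_q(u,v)w$ gives $b_q(u', v) = b_q(u', w) = 0$, so $u' \in W^{\perp}$; combined with $W \cap W^\perp = 0$ (from regularity of $W$), this yields an orthogonal decomposition $U = W \oplus W^\perp$. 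The complement $W^\perp$ is again regular (any element of $\rad(W^\perp)$ lies in $\rad(U) = 0$), so induction finishes this stage.

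\textbf{Decomposing the radical.} On $\rad(V)$, the polar form vanishes identically, so $q(v+w) = q(v) + q(w)$ for all $v,w \in \rad(V)$. Picking any basis $\{e_1, \dots, e_k\}$ of $\rad(V)$, additivity together with $q(\alpha v) = \alpha^2 q(v)$ gives $q\bigl(\sum \alpha_i e_i\bigr) = \sum \alpha_i^2 q(e_i)$, and the pairwise orthogonality of the $\langle e_i \rangle$ is automatic because $b_q$ vanishes on $\rad(V)$. Hence $q|_{\rad(V)}$ is the orthogonal sum of the one-dimensional forms $q|_{\langle e_i \rangle}$.

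I expect the main technical point to be the inductive step in the regular part: verifying that the chosen hyperbolic plane $W$ is indeed regular in characteristic $2$ and that $U = W \oplus W^\perp$ splits orthogonally, rather than just as a direct sum of subspaces. Once one knows $b_q$ is alternating and non-degenerate on $U$, this is the standard symplectic reduction, but it is the place where one must be careful not to borrow intuition from diagonalization in odd characteristic, where $1$-dimensional regular forms are allowed.
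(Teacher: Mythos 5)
Your proof is correct and is the standard argument (the paper itself gives no proof, only the citation to Pfister): split off a complement of the radical, observe the polar form is alternating and nondegenerate there, and run the symplectic reduction to peel off $2$-dimensional regular planes, while the radical diagonalizes trivially since $b_q$ vanishes on it. All the key verifications — regularity of the complement $U$, regularity of the plane $W = \langle v,w\rangle$, the orthogonal splitting $U = W \oplus W^{\perp}$, and regularity of $W^{\perp}$ — are carried out correctly.
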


More explicitly, the orthogonal decomposition $V = U \oplus \rad(V)$ is such that there exists a basis $\{e_i, f_i, g_j, 1\leq i\leq r, 1\leq j\leq s\}$ of $V$ where $2r+s=\dim(V)$ and elements 
$a_i, b_i, c_j \in \F,1\leq i\leq r, 1\leq j\leq s$ such that for all $\displaystyle v=\sum_{i = 1}^r (x_i e_i + y_i f_i)+\sum_{i = 1}^s z_j g_j,$ we have
$$q(v)=\sum_{i = 1}^r (a_i x_{i}^{2}+x_i y_j+ b_i y_{i}^{2})+\sum_{i = 1}^s c_j z_{j}^{2}.$$

In this case we say that $[a_1,b_1]\bot \cdots \bot [a_r, b_r]\bot \langle c_1,\cdots,c_s \rangle$ is the
{\it normalized form} of $q$ and $[a_1,b_1]\bot \cdots \bot [a_r, b_r]$ is the {\it regular part} of $q$. 
A quadratic form $q$ is said to be {\it non-singular} if $s=0$. It is called {\it singular} if $s\neq0$. In addition, if $r=0$ then $q$ 
is called {\it totally singular}.
If $s>0$ then regular part of quadratic form is generally not determined uniquely up to isometry, whereas the part $\langle c_1,\dots,c_2\rangle$ is 
always determined uniquely up to isometry. For example $[1,1]\oplus \langle 1 \rangle \cong [0,0]\oplus \langle 1 \rangle$ but $[1,1]\cong [0,0]$ holds if and only if the quadratic equation $x^2+x+1=0$ has a solution in $\F$. \\

It immediately follows from above Proposition that every regular quadratic form over a field of characteristic two is even dimensional.\\

 A quadratic from is said to be {\it isotropic} if there exists $0\neq v\in V$ such that $q(v)=0$, otherwise it is called {\it anisotropic}.
Quadratic form $[0,0]$ is the only $2$-dimensional isotropic quadratic form up to isometry. It is called {\it hyperbolic plane} and is denoted by $H$.
A quadratic space is said to be a {\it hyperbolic space} if it is orthogonal sum of hyperbolic planes. \\

  The following result is an analogue in characteristic 2 to the usual Witt decomposition in characteristic not equal to  2.
\begin{proposition}[\cite{HL}]\label{witt-decomposition-in-char-2}
Let $q$ be a quadratic form over $\F$. Then $q = i\times H ~\bot~ q_r~\bot~ q_s ~\bot~ j\times \langle 0 \rangle$, where 
$q_r$ is non-singular, $q_s$ is totally singular and $q_r ~\bot~ q_s$ is anisotropic. The form $q_r~\bot~ q_s$ is uniquely determined up to isometry.
\end{proposition}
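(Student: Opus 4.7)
The plan is to reduce to the anisotropic case by peeling off one copy of $H$ or $\langle 0 \rangle$ at a time, then invoke Theorem~\ref{quad-form-decompo-U-and-rad(V)} on the anisotropic remainder; uniqueness will come down to a Witt-type cancellation statement in characteristic two, which I would take from \cite{HL}.

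For existence I induct on $\dim V$. If $q$ is already anisotropic, Theorem~\ref{quad-form-decompo-U-and-rad(V)} gives $q = q_r \bot q_s$ with $q_r = q|_U$ regular and $q_s = q|_{\rad(V)}$ totally singular; both are automatically anisotropic since $q$ is, so taking $i = j = 0$ finishes this case. Otherwise pick a nonzero $v \in V$ with $q(v)=0$. If $v \in \rad(V)$ then $b_q(v,\cdot)=0$, so any vector-space complement $W$ of $\langle v\rangle$ in $V$ is automatically $b_q$-orthogonal to $\langle v\rangle$, yielding $q = \langle 0 \rangle \bot q|_W$. If $v \notin \rad(V)$, choose $u \in V$ with $b_q(v,u) = 1$; replacing $u$ by $u + q(u)\,v$ makes $q(u)=0$ without disturbing $b_q(v,u)=1$, so $\langle v, u\rangle$ carries the form $xy$, which is $H$. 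Since $b_q$ is non-degenerate on $\langle v, u\rangle$, the $b_q$-orthogonal complement $W = \{w \in V : b_q(v,w)=b_q(u,w)=0\}$ is a vector-space complement of $\langle v, u\rangle$ in $V$, and hence $q = H \bot q|_W$. In both subcases, induction applied to $q|_W$ completes the existence part.

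For uniqueness, suppose $q$ admits two such decompositions. Since $\rad(V)$ is intrinsic, comparing dimensions of the radicals and their complements on each side forces $\dim q_{s,1} + j_1 = \dim q_{s,2} + j_2$ and $\dim q_{r,1} + 2i_1 = \dim q_{r,2} + 2i_2$. The substantive assertion is the isometry $q_{r,1}\bot q_{s,1} \cong q_{r,2} \bot q_{s,2}$, and this is where I expect the main obstacle to lie. Indeed, Witt cancellation fails in general for singular forms in characteristic two: the excerpt's own example $H \bot \langle 1 \rangle \cong [1,1] \bot \langle 1\rangle$ shows that the individual pieces $q_r$ and $q_s$ of a Witt-type decomposition are not determined separately. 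What does survive is the anisotropic sum $q_r \bot q_s$ viewed as a single quadratic form, and this is exactly the content of the refined cancellation theorem of \cite{HL}. Applying that cancellation inductively, summand by summand, between the two decompositions yields the desired isometry of the anisotropic parts.
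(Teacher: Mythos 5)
The paper does not prove this proposition at all; it is quoted verbatim from \cite{HL}, so there is no in-paper argument to compare against. Your existence argument is correct and standard: the two peeling steps (splitting off $\langle 0\rangle$ when an isotropic vector lies in the radical, and splitting off $H$ via the normalization $u\mapsto u+q(u)v$ otherwise) are exactly how the decomposition is obtained in characteristic $2$, and the base case is handled by Theorem~\ref{quad-form-decompo-U-and-rad(V)}. Your treatment of uniqueness is also honest about where the real content lies: the dimension counts via $\rad(b_q)$ are fine, the paper's own example $[1,1]\perp\langle 1\rangle\cong[0,0]\perp\langle 1\rangle$ correctly shows that $q_r$ and $q_s$ are not separately determined, and the isometry of the full anisotropic part $q_r\perp q_s$ genuinely requires the cancellation results of \cite{HL} (nonsingular Witt cancellation together with the separate argument for cancelling $\langle 0\rangle$ summands), which you invoke rather than prove. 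Since the paper itself simply cites \cite{HL} for the whole statement, deferring that one step to the same reference is acceptable; a fully self-contained proof would need to supply those cancellation lemmas.
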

The anisotropic part $q_r ~\bot~ q_s$ of $q$ will be denoted by $q_{an}$ in short. Two quadratic forms $q_1$ and $q_2$ are called
{\it Witt equivalent} (denoted $q_1 \sim q_2$)  if ${q_1}_{an}\cong {q_2}_{an}$. If $q_1$ and $q_2$ are non-singular 
then $q_1 \sim q_2$ if $q_1 ~\bot~ -q_2$ is hyperbolic. The set $W_q(\F)$ of Witt equivalence classes of regular quadratic forms
over $\F$ forms an abelian group under the operation of orthogonal sum of quadratic spaces. It is called the {\it Witt group of 
quadratic forms}. \\
  
As all regular quadratic forms are of even dimension, the {\it dimension invariant}
$e_0 : W(\F) \to \frac{\Z}{2\Z}$ given by $q \mapsto \dim(q)\mod 2$ is trivial. In $\charac(\F) = 2$ case an invariant at next level is 
Arf invariant which is a substitute of Discriminant in $\charac(\F) \neq 2$ case. It was defined by Arf in his classical paper \cite{Arf}.
  Let $q: V \rightarrow \F$ be a regular $2n$-dimensional quadratic form. As $b_q$ is an alternating form, the space $(V, b_q)$ has a symplectic basis
$\{e_i, f_i, 1\leq i\geq n\}$. Let
$\wp(\F)=\{x^2+x : ~x\in \F\}$. The set $\wp(\F)\}$ is a subgroup of $(\F,+)$.
In the quotient $\F/\wp(\F)$, the class of element $\sum_{i = 1}^n q(e_i)q(f_i)$ is called the {\it Arf invariant of $q$} and is denoted by $\Arf(q)$. 
It is independent the choice of symplectic basis (see \cite{Scharlau}). Moreover, it defines a homomorphism $\Arf : W_q(\F) \to \F/\wp(\F)$ 
on the Witt group of $\F$. More explicitly if $q=[a_1,b_1]~\bot~ \cdots ~\bot~ [a_n,b_n]$ then $\Arf(q) = a_1b_1 + \cdots + a_nb_n \in \F/\wp(\F)$. \\

\section{Special $2$-groups and quadratic forms}   
Let $G$ be a finite group. Let $\Omega (G)=\langle x\in G~:~x^2=1\rangle$ and $G^{\prime} = [G, G]$, the derived subgroup of $G$. 
Let $Z(G)$ denote the center of $G$ and $\Phi(G)$ denote the Frattini subgroup of $G$. Recall that for a non-trivial group 
the intersection of its index $2$ subgroups is called its {\it Frattini subgroup}. We record that 
$\Phi(G) =\langle x^2 : x\in G \rangle$. \\

A $2$-group $G$ is called {\it special $2$-group} if $G$ is non-commutative and $$G^{\prime} = \Phi (G) = Z(G) = \Omega (Z(G)).$$
Moreover, if $G$ is a special $2$-group such that $|Z(G)|=2$ then $G$ is called an {\it extraspecial $2$-group}. \\
\begin{remark}\label{special-2-group-order-2-or-4}
In a special $2$-group, the order of non-identity elements is either $2$ or $4$.  \\
\end{remark}

From now onwards $\F_2$ will denote the field of two elements and $V$ and $W$ will be vector spaces over $\F_2$.
A map $c: V \times V \rightarrow W$ is called a {\it normal $2$-cocycle} on $V$ with coefficients in $W$ if 
for all $v,v_1,v_2,v_3\in V$ it satisfies the following conditions:
\begin{enumerate}
\item[$i.$] $c(v_2,v_3)-c(v_1+v_2,v_3)+c(v_1,v_2+v_3)-c(v_1,v_2)=0$.
\item[$ii.$] $c(v,0)=(0,v)=0$. \\
\end{enumerate} 

We denote the set of normal $2$-cocycles on $V$ with coefficients in $W$ by $Z^2(V,W)$ and consider it as an abelian group
under pointwise addition.
Let $\lambda:V\rightarrow W$ be a linear map such that $\lambda(0)=0$. Then the map
$c_{\lambda} : V \times V \to W$ defined by $c_{\lambda}(v_1,v_2)=\lambda(v_2)-\lambda(v_1+v_2)+\lambda(v_1)$
is a normal $2$-cocycle. Such $2$-cocycles are called {\it normal $2$-coboundaries} and their collection is denoted by $B^2(V,W)$.
The set $B^2(V,W)$ forms a subgroup of $Z^2(V,W)$. The quotient $H^2(V,W)=\frac {Z^2(V,W)}{B^2(V,W)}$ is called
the {\it second cohomology group of $V$ with coefficients in $W$}. \\

We consider $V$ and $W$ as groups under the operation of vector space addition. A short exact sequence of groups 
$1\rightarrow V\rightarrow G\rightarrow W\rightarrow 1$ is called a {\it central extension of $W$ by $V$} if
$V\subseteq Z(G)$. The set of isomorphism classes of central extension of $W$ by $V$ is in one to one correspondence with $H^2(V,W)$ 
\cite[\S 6.6]{Weibelbook}. We record that the central extension of $W$ by $V$ corresponding 
to a cocycle class $[c]\in H^2(V,W)$ is isomorphic to the group $V\dot{\times} W$, where the underlying set of
the group $V\dot{\times} W$ is just the cartesian product $V \times W$ and its group operation is defined by
\begin{center}
 $(v,w)(v^{\prime},w^{\prime})=(v+v^{\prime}, c(v,v^{\prime})+w+w^{\prime})$
\end{center}
for all $v, v^{\prime} \in V$ and $w, w^{\prime} \in W$.
The identity element this group is $(0,0)$ and the inverse of $(v,w)$ is $(v, c(v,v)+w)$. \\

A map $q : V \to W$ is called a {\it quadratic map} if $q(\alpha v) = \alpha^2 q(v)$ for all $v \in V$, $\alpha \in \F$
and the map $b_q : V \times V \to W$ defined by $b_q (v, w) = q(v+w)- q(v) - q(w)$ is bilinear.
We denote by $\langle b_q(V \times V) \rangle$ the subgroup of $W$ generated by the image of $b_q$.
Let $\Quad(V,W)$ denote the set of quadratic maps from $V$ to $W$. We consider it as a group under the group operation
of pointwise addition of maps. The following proposition gives correspondence between $H^2(V,W)$ and $\Quad(V,W)$.
\begin{proposition}[\cite{ObedPaper}, Prop. 1.2] \label{cohomology-and-quadratic-map-correspondance}
 The map $\varphi: Z^2(V,W) \rightarrow \Quad(V,W)$ which maps $c\in Z^2(A,B)$ to the quadratic map $q_c$ defined by $q_c(x)=c(x,x)$
induces a homomorphism between $H^2(V,W)$ and $\Quad(V,W)$. If the dimension of $V$ is finite then this homomorphism is
an isomorphism. 
\end{proposition}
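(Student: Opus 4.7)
The plan is to verify the necessary properties of $\varphi$ in turn: that $q_c$ is indeed a quadratic map, that $\varphi$ is a group homomorphism vanishing on $B^2(V,W)$, and that the induced map $\bar\varphi: H^2(V,W) \to \Quad(V,W)$ is bijective when $\dim_{\F_2} V < \infty$.

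For well-definedness of $q_c$, the homogeneity $q_c(\alpha v) = \alpha^2 q_c(v)$ is trivial over $\F_2$. The substantive content is bilinearity of the polar: unfolding gives $b_{q_c}(v,w) = c(v+w, v+w) - c(v,v) - c(w,w)$, and applying the cocycle identity first to the triple $(v, w, v+w)$ and then to $(w, w, v)$ — using normalization together with characteristic two throughout — collapses this to the clean formula $b_{q_c}(v,w) = c(v,w) + c(w,v)$. Bilinearity of the latter expression then follows from two further applications of the cocycle identity on the triples $(v_1, v_2, w)$ and $(w, v_1, v_2)$. The homomorphism property of $\varphi$ with respect to pointwise addition is immediate, and to see $\varphi(B^2(V,W)) = 0$ I compute $c_\lambda(v,v) = 2\lambda(v) - \lambda(2v) = 0 - \lambda(0) = 0$ in characteristic two, so $\bar\varphi$ is well-defined on $H^2(V,W)$.

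For surjectivity with $n = \dim V < \infty$, I fix an ordered basis $\{e_1, \dots, e_n\}$ of $V$ and define $c: V \times V \to W$ by prescribing $c(e_i, e_i) := q(e_i)$, $c(e_i, e_j) := b_q(e_i, e_j)$ for $i < j$, and $c(e_i, e_j) := 0$ for $i > j$, then extending bilinearly in both slots. Any bilinear map sending $0$ to $0$ in either coordinate is automatically a normal $2$-cocycle. A short expansion using $q(\sum_i a_i e_i) = \sum_i a_i q(e_i) + \sum_{i<j} a_i a_j b_q(e_i, e_j)$ then confirms $c(v,v) = q(v)$ for all $v \in V$, so $\bar\varphi([c]) = q$.

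For injectivity, suppose $c \in Z^2(V, W)$ satisfies $c(v,v) = 0$ for all $v$. Then $b_{q_c} \equiv 0$, and the identity $b_{q_c}(v,w) = c(v,w) + c(w,v)$ established above forces $c$ to be symmetric. Consequently the central extension $G = V \dot{\times} W$ built from $c$ is abelian, and each element satisfies $(v,w)^2 = (0, c(v,v)) = (0,0)$, making $G$ an elementary abelian $2$-group — that is, an $\F_2$-vector space. The short exact sequence $0 \to W \to G \to V \to 0$ of $\F_2$-vector spaces splits linearly; any splitting $\sigma(v) = (v, \mu(v))$ produces a set map $\mu: V \to W$ with $\mu(0) = 0$ satisfying $c = c_\mu \in B^2(V, W)$. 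I expect the main obstacle to be the cocycle bookkeeping in step one — extracting bilinearity of $b_{q_c}$ from the cocycle identity is conceptually straightforward but demands patience with signs and char-$2$ cancellations — after which the surjectivity and injectivity arguments drop out with minimal further effort.
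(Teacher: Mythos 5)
Your proposal is correct and complete. The paper itself offers no proof of this proposition --- it is quoted verbatim from \cite{ObedPaper} --- and your argument is the standard one: the cocycle identity yields $b_{q_c}(v,w)=c(v,w)+c(w,v)$ and hence bilinearity, coboundaries die on the diagonal in characteristic two, a bilinear cocycle built from a basis gives surjectivity, and the splitting of the resulting elementary abelian extension gives injectivity (the only quibble being that closing the bilinearity computation for $b_{q_c}(v_1+v_2,w)$ seems to need one more application of the cocycle identity, on the triple $(v_1,w,v_2)$, than the two you announce).
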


If dimension of $V$ is finite then above proposition and the correspondence of elements of $H^2(V,W)$ with central extension of $V$
by $W$ gives a useful correspondence between $\Quad(V,W)$ and central extension of $V$ by $W$ \cite{ObedPaper}.

\begin{theorem}[\cite{Obedthesis}, Th. 3.4.11] \label{quad-map-of-special-2-group}
Let $G$ be a special $2$-group and $q:\frac{G}{Z(G)}\rightarrow Z(G)$ be the map given by $q(xZ(G))=x^2$ for all $x\in G$. Then
$q$ is a quadratic map and $b_q(xZ(G),yZ(G))=xyx^{-1}y^{-1}$ for all $x,y\in G$. 
\end{theorem}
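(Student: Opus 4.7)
The plan is to unpack the definitions and reduce the theorem to two standard commutator identities, both of which are available because every commutator and every square in $G$ is central. First I fix the ambient $\F_2$-vector space structures: because $G$ is special, both $G/Z(G) = G/\Phi(G)$ and $Z(G) = \Omega(Z(G))$ are elementary abelian $2$-groups, so I view them as $\F_2$-vector spaces written additively. The map $q$ is well defined, for if $x' = xz$ with $z \in Z(G)$, then centrality of $z$ and $z^2 = 1$ give $(xz)^2 = x^2 z^2 = x^2$. The scalar identity $q(\alpha v) = \alpha^2 q(v)$ reduces, over $\F_2$, to the single assertion $q(0_V) = 0_W$, i.e.\ $1_G^2 = 1_G$, which is trivial.

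The heart of the argument is the computation of the polar map. Using the identity $xy = [x,y]\,yx$, together with the fact that $[x,y] \in [G,G] = Z(G)$ and $x^2 \in \Phi(G) = Z(G)$ are both central, I compute in $G$:
\[ (xy)^2 = [x,y]\,yx \cdot xy = [x,y]\,y\,x^2\,y = [x,y]\,x^2\,y^2. \]
Since $Z(G)$ has exponent $2$, every element of $Z(G)$ equals its own inverse, so translating to additive notation on $Z(G)$ absorbs the minus signs and yields
\[ b_q(xZ(G), yZ(G)) = (xy)^2 - x^2 - y^2 = [x,y] = xyx^{-1}y^{-1}, \]
which is the asserted formula.

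Finally, bilinearity of $b_q$ reduces, by the symmetry $b_q(v,w) = b_q(w,v)$ (immediate from the definition), to additivity in the first slot, namely the commutator identity $[x_1 x_2, y] = [x_1, y][x_2, y]$. Writing $x_2 y x_2^{-1} = [x_2,y]\,y$ and using centrality of $[x_2, y]$,
\[ [x_1 x_2, y] = x_1 \bigl(x_2 y x_2^{-1}\bigr) x_1^{-1} y^{-1} = x_1 \cdot [x_2, y]\,y \cdot x_1^{-1} y^{-1} = [x_2, y]\,[x_1, y], \]
which completes the proof. The only real obstacle is bookkeeping: juggling multiplicative notation on $G$ with additive notation on the $\F_2$-vector spaces $G/Z(G)$ and $Z(G)$, and consistently invoking the centrality of $\Phi(G) = Z(G)$ together with the exponent-$2$ condition on $Z(G)$ to commute central factors past arbitrary elements and to convert inverses into themselves.
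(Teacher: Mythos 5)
Your proof is correct and complete: well-definedness of $q$ via $(xz)^2 = x^2z^2 = x^2$, the computation $(xy)^2 = [x,y]\,x^2y^2$ using centrality of $G'=\Phi(G)=Z(G)$, and bilinearity via $[x_1x_2,y]=[x_1,y][x_2,y]$ are exactly the standard argument. The paper itself gives no proof (it cites Th.\ 3.4.11 of the referenced thesis), and your write-up is the expected one.
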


Note that the quadratic map $q$ in the above theorem is regular and the image of $b_q$ generates $Z(G)$.
This quadratic map $q$ is called the {\it quadratic map associated to the special $2$-group $G$}. 

\begin{theorem}[\cite{ObedPaper}, Th. 1.4] \label{special-2-group-of-a-quad-map}
Let $q:V\rightarrow W$ be a regular quadratic map. Suppose that $ W=\langle b_q(V\times V)\rangle$.
Then there exists a special $2$-group $G$ associated with quadratic map $q$ such that $W=Z(G)$ and $V=\frac{G}{Z(G)}$. Such a group is unique up to isomorphism.
\end{theorem}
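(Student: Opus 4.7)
The plan is to build $G$ as the central extension of $V$ by $W$ attached to the cocycle class corresponding to $q$ under Proposition \ref{cohomology-and-quadratic-map-correspondance}. Since $\dim V$ is finite, that proposition gives $H^2(V,W) \iso \Quad(V,W)$, so I pick any $c \in Z^2(V,W)$ with $q_c = q$ and let $G = V \dot{\times} W$ be the corresponding extension described just before the proposition. Then $G$ is automatically a finite $2$-group, $\{0\} \times W$ sits inside the center, and the projection $(v,w) \mapsto v$ identifies $G/(\{0\} \times W)$ with $V$; what remains is to verify the special-$2$-group axioms and match the associated quadratic map.

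Direct computation from the group law gives $(v,w)^2 = (0, c(v,v)) = (0, q(v))$ and $[(v,w),(v',w')] = (0, c(v,v')+c(v',v))$. A short manipulation of the cocycle identity combined with $q_c = q$ yields $c(v,v') + c(v',v) = b_q(v,v')$, so $G' = \{0\} \times \langle b_q(V\times V)\rangle$, which equals $\{0\} \times W$ by hypothesis. The square formula places all squares in $\{0\} \times W$, and conversely $\{0\} \times \langle q(V)\rangle \supseteq \{0\} \times \langle b_q(V\times V)\rangle = \{0\} \times W$ via $b_q(v,v') = q(v+v')+q(v)+q(v')$; using $\Phi(G) = \langle x^2 : x \in G\rangle$ this gives $\Phi(G) = \{0\} \times W$. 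Regularity of $q$ is the crucial input for the center: $(v,w)$ is central iff $b_q(v,\cdot) = 0$, iff $v \in \rad(V) = 0$, giving $Z(G) = \{0\} \times W$. Since $W$ is elementary abelian, $\Omega(Z(G)) = Z(G)$, and $G$ is non-commutative because $W \neq 0$ together with regularity forces $b_q \not\equiv 0$. Thus $G$ is special, and the associated quadratic map of Theorem \ref{quad-map-of-special-2-group} is $v \mapsto (v,0)^2 = q(v)$, i.e.\ $q$ itself.

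Uniqueness follows from the dictionary between central extensions and $H^2$: if $G_1, G_2$ are two special $2$-groups with the prescribed identifications $Z(G_i) = W$, $G_i/Z(G_i) = V$ and induced quadratic map $q$, then each corresponds to a class in $H^2(V,W)$, and under the isomorphism with $\Quad(V,W)$ both classes hit $q$, so they coincide in $H^2(V,W)$ and therefore $G_1 \cong G_2$ as central extensions. The one step I expect to be the most delicate is the identification $Z(G) = \{0\} \times W$: the inclusion $\supseteq$ is formal, but ruling out extra central elements genuinely uses regularity of $q$. Everything else is either routine cocycle manipulation or a direct application of the correspondences already established in the preceding sections.
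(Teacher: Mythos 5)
The paper itself gives no proof of this theorem --- it is imported by citation from \cite{ObedPaper} --- so there is nothing internal to compare against; your argument is the natural one the paper gestures at when it links $\Quad(V,W)$, $H^2(V,W)$ and central extensions. Your construction and verification are correct: the cocycle manipulation giving $c(v,v')+c(v',v)=b_q(v,v')$ does go through, regularity is exactly what pins down $Z(G)=\{0\}\times W$, the hypothesis $W=\langle b_q(V\times V)\rangle$ is what forces $G'=\Phi(G)=Z(G)$, and uniqueness follows from the isomorphism $H^2(V,W)\cong\Quad(V,W)$ of Proposition \ref{cohomology-and-quadratic-map-correspondance}. The only cosmetic caveats: the degenerate case $W=0$ must be implicitly excluded for non-commutativity, and your orientation of the extension ($W$ central kernel, quotient $V$) is the correct one even though the paper's prose states the exact sequence the other way around.
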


% \begin{proof}
% 
% \hfill $\square$
% \end{proof}

The special $2$-group $G$ in the above theorem is called {\it the group associated to regular quadratic form $q$}.
%    It has been observed that if central extension of W by V is a special 2-group then its corresponding quadratic map 
% $q:V\rightarrow W$ is regular. 
% Moreover $\langle b_q(V\times V)\rangle=Z(G)$.  
% \begin{proposition}
% Every extraspecial $2$-group except $Q_{8}$ is strongly real. 
% \end{proposition}
% \begin{proof}
% Since $D_{4}$ and $Q_{8}\circ D_{4}$ are strongly real, the result follows from fact
% that central product of two strongly real groups is strongly real.
% \hfill $\square$
% \end{proof}

% \begin{proposition}
% A special $2$-group has faithful representation iff it is extraspecial $2$-group. 
% \end{proposition}

We recall the definition of central product of a two groups. Let $G_1$ and $G_2$ two groups, $Z(G_1)$ and $Z(G_2)$ be
their centers and $\theta : Z(G_1) \to Z(G_2)$ be a group isomorphism. 
Let $N$ denote the normal subgroup $\{(x, y) \in Z(G_1) \times Z(G_2) : \theta(x)y = 1\}$ of $Z(G_1) \times Z(G_2)$.
The {\it central product} of $G_1$ and $G_2$ is the quotient of direct product $G_1 \times G_2$ by $N$. 
The following lemma relates orthogonal sum of quadratic maps to central products.

\begin{lemma}\label{ortho-sum-corresponds-to-central-product}
Let $V_1, V_2$ and $W$ be finite dimensional vector spaces over the field $\F_2$.
Let $q_{1}:V_{1}\rightarrow W$ and $q_{2} : V_{2}\rightarrow W$ be regular quadratic
maps associated to special $2$-groups $G_{1}$ and $G_{2}$, respectively. Then 
$q_{1}\perp q_{2} : V_1 \oplus V_2 \to W$ defined by 
$(q_1 \perp q_2)(v_1, v_2) = q_1(v_1) + q_2(v_2)$ is a regular quadratic map and the group associated
to $q_{1}\perp q_{2}$ is $G_{1}\circ G_{2}$, where $\circ$ denotes the central product.
\end{lemma}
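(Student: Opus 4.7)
The plan is to first verify that $q_1\perp q_2$ really is a regular quadratic map whose polar form's image generates $W$, so that Theorem \ref{special-2-group-of-a-quad-map} applies and produces a unique special $2$-group. Then I will identify that group with the central product $G_1\circ G_2$ via an explicit isomorphism obtained from the cocycle description of Proposition \ref{cohomology-and-quadratic-map-correspondance}.

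For the first part, the quadratic property of $q_1\perp q_2$ is immediate: $(q_1\perp q_2)(\alpha v_1,\alpha v_2)=\alpha^2 q_1(v_1)+\alpha^2 q_2(v_2)$, and a short calculation shows the polar map satisfies
\[
b_{q_1\perp q_2}\bigl((v_1,v_2),(v_1',v_2')\bigr)=b_{q_1}(v_1,v_1')+b_{q_2}(v_2,v_2'),
\]
which is bilinear and splits as the orthogonal sum of $b_{q_1}$ and $b_{q_2}$. Regularity of $q_i$ forces $\rad(V_1\oplus V_2)=\rad(V_1)\oplus\rad(V_2)=0$, so $q_1\perp q_2$ is regular. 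Since $W=\langle b_{q_1}(V_1\times V_1)\rangle$ already lies inside the image of $b_{q_1\perp q_2}$ (take $v_2=v_2'=0$), the spanning hypothesis of Theorem \ref{special-2-group-of-a-quad-map} holds, and there is a unique special $2$-group $G$ with $q_{1}\perp q_{2}$ as its associated quadratic map.

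For the identification $G\cong G_1\circ G_2$, I would use the cocycle model. By Proposition \ref{cohomology-and-quadratic-map-correspondance}, pick normal $2$-cocycles $c_i\in Z^2(V_i,W)$ realising $q_i$, so that $G_i$ can be taken as $V_i\dot\times W$ with multiplication $(v,w)(v',w')=(v+v',c_i(v,v')+w+w')$; then $c((v_1,v_2),(v_1',v_2')):=c_1(v_1,v_1')+c_2(v_2,v_2')$ is a normal $2$-cocycle on $V_1\oplus V_2$ whose associated quadratic map is exactly $q_1\perp q_2$, so $G=(V_1\oplus V_2)\dot\times W$. Next I would define
\[
\psi:G_1\times G_2\longrightarrow G,\qquad \psi\bigl((v_1,w_1),(v_2,w_2)\bigr)=\bigl((v_1,v_2),\,w_1+w_2\bigr),
\]
check directly from the two product formulas that $\psi$ is a surjective group homomorphism, and compute that $\ker\psi$ consists of pairs $((0,w_1),(0,w_2))$ with $w_1+w_2=0$. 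Taking $\theta:Z(G_1)\to Z(G_2)$ to be the identity on $W$, this kernel is precisely the normal subgroup $N$ defining $G_1\circ G_2$, so $\psi$ descends to the required isomorphism.

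The routine parts are the bilinearity and regularity checks and the verification that $\psi$ is a homomorphism. The main conceptual step, and where care is required, is the identification of $\ker\psi$ with the subgroup $N\subseteq Z(G_1)\times Z(G_2)$ used to form the central product; this depends on choosing the isomorphism $\theta$ compatibly with the common embedding of $W$ as $Z(G_i)$ inside each $V_i\dot\times W$, and on the characteristic $2$ fact that $-w=w$, which makes the relation $\theta(w_1)w_2=1$ read as $w_1+w_2=0$ in additive notation.
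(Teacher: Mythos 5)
Your proposal is correct and follows essentially the same route as the paper: the paper also forms the sum cocycle $c_1\perp c_2$, defines the map $(v_1,w_1),(v_2,w_2)\mapsto((v_1,v_2),w_1+w_2)$ on $G_1\times G_2$, and identifies its kernel with the subgroup defining the central product (with $\theta$ the identity on $W$). Your explicit verification of regularity and of the spanning condition is a small addition the paper leaves implicit, but the argument is the same.
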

\begin{proof}
Since quadratic maps $q_{1}:V_{1}\rightarrow W$ and $q_{2} : V_{2}\rightarrow W$ are associated to
special $2$-groups $G_1$ and $G_2$, respectively, we have
$V_1=\frac{G_1}{Z(G_1)},~V_2=\frac{G_2}{Z(G_2)}$ and $W=Z(G_1)=Z(G_2)$. 
Let $c_1\in Z^2(V_1,W)$ and $c_2\in Z^2(V_2,W)$ be normal $2$-cocycles associated to quadratic maps
$q_1$ and $q_2$, respectively.\\

Let $q =q_1 \perp q_2$. Then $b_q((v_1,v_2),(v_1^{\prime},v_2^{\prime}))=
b_{q_{1}}(v_1,v_1^{\prime})+b_{q_{2}}(v_2,v_2^{\prime})$ where $
(v_1, v_2)$ and $(v_1^{\prime},v_2^{\prime})\in V_1\oplus V_2$.
Let $c := c_1 \perp c_2 : (V_1\oplus V_2) \times (V_1 \oplus V_2) \to W$ be the map defined by
$$c((v_1,v_2),(v_1^{\prime},v_2^{\prime}))= c_1(v_1,v_1^{\prime})+c_2(v_2,v_2^{\prime}).$$
It is straightforward to check that $c$ is a normal $2$-cocycle on $V_1 \oplus V_2$ with 
coefficients in $W$ and the association $([c_1], [c_2]) \mapsto [c] \in H^2(V_1 \oplus V_2, W)$ is well-defined.
The normal $2$-cocycle $c$ corresponds to the quadratic form $q$. Special $2$-group associated with $q$
is $G=(V_1\oplus V_2)\dot{\times}W$, with the group operation
\begin{center}
$((v_1,v_2),w)((v_1^{\prime}, v_2^{\prime}),w^{\prime})=((v_1+v_1^{\prime}, v_2+v_2^{\prime}), 
c((v_1, v_2), (v_1^{\prime},v_2^{\prime}))+w+w^{\prime})$.
\end{center}
We need to show that $G = G_1 \circ G_2$. By definition $G_1 \circ G_2$ is the quotient of $G_1 \times G_2$ by
$\ker(f)$ where $f:W\times W\rightarrow W$ is the homomorphism defined by $f(w,w^{\prime})=w+w^{\prime}$; $w,w^{\prime}\in W$.
Define $\phi :G_1\times G_2\rightarrow G$ by
\begin{center}
 $\phi((v_1,w),(v_2,w^{\prime}))=((v_1,v_2),w+w^{\prime})$
\end{center}
where $(v_1,w)\in G_1,(v_2,w^{\prime})\in G_2$. We notice that $\phi$ is a group homomorphism as
\begin{align*}
&\quad \quad \phi(((v_1,w_1),(v_2,w_1^{\prime}))+((v_1^{\prime},w_2),(v_2^{\prime},w_2^{\prime}))) \\
&=\phi(((v_1,w_1)+(v_1^{\prime},w_2)),((v_2,w_1^{\prime})+(v_2^{\prime},w_2^{\prime})))\\
&=\phi((v_1+v_1^{\prime},c_1(v_1,v_1^{\prime})+w_1+w_2),(v_2+v_2^{\prime},c_2(v_2,v_2^{\prime})+w_1^{\prime}+w_2^{\prime}))\\
&=((v_1+v_1^{\prime}, v_2+v_2^{\prime}), c_1(v_1,v_1^{\prime})+c_2(v_2,v_2^{\prime})+w_1+w_1^{\prime}+w_2+w_2^{\prime})\\
&=((v_1,v_2) + (v_1^{\prime},v_2^{\prime}), c((v_1,v_2),(v_1^{\prime},v_2^{\prime}))+w_1+w_1^{\prime}+w_2+w_2^{\prime})\\
&=((v_1,v_2),w_1+w_1^{\prime})((v_1^{\prime},v_2^{\prime}),w_2+w_2^{\prime})\\
&=\phi((v_1,w_1),(v_2,w_1^{\prime}))\phi((v_1^{\prime},w_2),(v_2^{\prime},w_2^{\prime})).
\end{align*}
where $(v_1,w_1),(v_1^{\prime},w_2)\in G_1$ and $(v_2,w_1^{\prime}),(v_2^{\prime},w_2^{\prime})\in G_2$. 
The homomorphism $\phi$ is surjective because for an arbitrary $((v_1,v_2),w)\in G$
we have $\phi((v_1,0),(v_2,w))=((v_1,v_2),w)$. 
Now identifying $W\times W$ with $(0\dot{\times}W)\times (0\dot{\times}W)\subset G_1\times G_2$, it follows that $\ker{\phi}$
gets identified with $\ker{f}$ and we finally have
$G \simeq \frac{G_1\times G_2}{\ker(\phi)} = \frac{G_1\times G_2}{\ker(f)} = G_1 \circ G_2$.
\hfill $\square$ \\
\end{proof}
We shall use this lemma while discussing classification of extraspecial $2$-groups. The following theorem gives a characterization of
strongly real special $2$-groups.
\begin{theorem}\label{strongly-real-criterion}
Let $q:  V \to W$ be a regular quadratic map with $\langle b_q(V\times V)\rangle = W$ and $G$ be the special $2$-group associated with $q$ such that $V = \frac{G}{Z(G)}$ and $W = Z(G)$ (cf. Th. \ref{special-2-group-of-a-quad-map}).
Then $G$ is strongly real if and only if for every nonzero $v\in V$ there exists $a\in V$ with $v \neq a$ and 
$q(a) = q(a - v) = 0$. 
\end{theorem}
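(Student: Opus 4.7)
The plan is to translate the group-theoretic condition of strong reality into a condition purely on the quadratic map $q$, using the explicit description of $G$ as the twisted extension $V\dot{\times}W$ afforded by Theorem \ref{special-2-group-of-a-quad-map}. I fix the normal $2$-cocycle $c \in Z^{2}(V,W)$ corresponding to $q$ under the isomorphism of Proposition \ref{cohomology-and-quadratic-map-correspondance}, so that $c(v,v) = q(v)$ for all $v \in V$. Multiplication in $G$ is then $(v,w)(v',w') = (v+v',\, c(v,v')+w+w')$, the inverse of $(v,w)$ is $(v,\, q(v)+w)$, and a direct expansion gives the squaring identity
\[(v,w)^{2} = (0,\, q(v)).\]
In particular, $(a,u) \in G$ is an involution if and only if $q(a) = 0$.

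Next I convert the conjugation equation into a squaring equation. For any $y \in G$ with $y^{2} = 1$ one has $y = y^{-1}$, so $y^{-1}xy = x^{-1}$ is equivalent to $yxy = x^{-1}$; multiplying on the left by $x$ gives $(xy)^{2} = 1$, and the step is reversible. Hence $x = (v,w)$ is strongly real via some $y = (a,u)$ precisely when $y^{2} = 1 = (xy)^{2}$. Reading both equalities through the displayed identity and using $\overline{xy} = v+a$ in $V$, these conditions become $q(a) = 0$ and $q(v+a) = 0$. The parameter $u$ plays no role, so the condition depends only on $v = \bar{x}$, and I conclude that $G$ is strongly real if and only if for every $v \in V$ there exists $a \in V$ with $q(a) = 0 = q(v+a)$.

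Finally I match this with the statement of the theorem. Because $W$ has characteristic $2$, $v+a = a-v$, so the condition reads $q(a) = q(a-v) = 0$. The case $v = 0$ is trivial (take $a = 0$), which is why the theorem restricts to nonzero $v$. The extra clause $a \neq v$ costs nothing: if $q(v) \neq 0$ then every admissible $a$ automatically differs from $v$ (else $0 = q(a) = q(v)$, a contradiction), while if $q(v) = 0$ then $a = 0$ is an admissible choice distinct from $v$ (as $v \neq 0$). The only step that is not routine bookkeeping is the reduction $y^{2}=1 \;\wedge\; y^{-1}xy = x^{-1} \Longleftrightarrow y^{2}=(xy)^{2}=1$, which is what converts a conjugation equation into a squaring equation and allows the quadratic map $q$ to enter the picture; after that, everything reduces to direct cocycle arithmetic and the observation that $q(v+a) = q(a-v)$ in characteristic $2$.
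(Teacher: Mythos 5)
Your proof is correct and follows essentially the same route as the paper: both arguments rest on the identity $(v,w)^2=(0,q(v))$ in $G=V\dot{\times}W$, so that inversion by a self-inverse element $y$ with $\bar y=a$ translates into $q(a)=q(a-v)=0$. Your packaging of the converse via the reversible equivalence $y^{2}=1 \wedge y^{-1}xy=x^{-1} \iff y^{2}=(xy)^{2}=1$ merely replaces the paper's explicit cocycle conjugation computation with the same underlying observation, and your handling of the $a\neq v$ clause and the $v=0$ case is a correct (and slightly more careful) account of details the paper leaves implicit.
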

\begin{proof}
We first suppose $G$ to be strongly real.
Let $0  \neq \overline{x} = v \in V$. Since $G$ is strongly real there exists $y \in G$ such that 
$o(y)=2$ and $yx^{-1}=xy$. We take $a = \overline{y}$. Then
$(yx^{-1})^2 = yx^{-1}xy = y^{2} = e$, which translates to
$q(a - v) = q(a) = 0$. \\

%It remains to show that $v \neq a$. To the contrary we assume that
%$v = a$ then $yx^{-1} = xy \in Z(G)$. In particular, $x(xy) = (xy)x$. Thus 
%$x^2 = xyxy^{-1} = yx^{-1}xy^{-1} = e$.
%Therefore $(yx^{-1})(yx^{-1}) = (yx^{-1})^2 = e$
%Thus  and $q(\overline{xy})=(xy)^{2}=y^{2}=e=q(\overline{y})$. 

For the converse part, recall that $G = V\dot{\times} W$ where the group operation is defined by
\begin{center}
$(v,w)(v^{\prime}  ,w^{\prime} )=(v+v^{\prime}  ,c(v,v^{\prime})+w+w^{\prime})$\\
$(v,w)^{-1}=(v,c(v,v)+w)$.
\end{center}
where $[c] \in H^{2}(V,W)$ and $q(x)=c(x,x)$. Let $x = (v, w) \in V \dot{\times} W = G$. By hypothesis
there exists $a \in V$ such that $q(a) = q(a-v) = 0$. We take $y = (a - v, 0) \in G$.
Since $$(a - v, 0) + (a - v, 0) = (2(a-v), c(a-v,a-v)) = (2(a-v), q(a-v)) = (0, 0)$$
it follows that $y^2 = 1$. Moreover
\begin{align*}
(a-v, 0)(v,w)(a-v, 0) &=(2a - v, c(a-v,v)+c(a,a-v)+w)\\
&=(v, c(v,v) + c(a,a)+w) \\
&=(v, q(a) + c(v,v) + w) \\
&=(v, c(v,v) + w) = (v,w)^{-1}.
\end{align*}
Therefore $yxy = x^{-1}$. Further since $y^2 = 1$, we conclude that $yxy^{-1} = x^{-1}$
and $G$ is strongly real.
\hfill $\square$\\
\end{proof} 

In view of the above theorem, we remark that for a strongly real special $2$-group
the associated quadratic map is always isotropic. However the converse is not true.
For example, consider the special $2$-group $G$ associated to the quadratic map
$q(x,y,z)=(x^2+xy+y^2, xz)$. The quadratic map is isotropic because $q(0,0,1)=(0,0)$.
But the group $G$ is not strongly real by above theorem because for $v=(1,1,1)$, 
there does not exist any $a$ such that $q(a)=q(a-v) = 0$.

\section{Classification of extraspecial $2$-groups}
The aim of this section is to classify extraspecial $2$-groups in terms of
quadratic forms over $\F_2$ and to show that all extraspecial $2$-groups except $Q_2$ are
strongly real. We start with two quick lemmas.
\begin{lemma}\label{for-quad-form-extraspecial-group}
Let $V$ be a vector space over $\F_2$ and $q : V\rightarrow \F_2$ be a regular
quadratic form. Then the group associated to $q$ is extraspecial $2$-group,
and conversely.
\end{lemma}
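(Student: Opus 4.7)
The plan is to treat the two directions separately, using the group/quadratic-map correspondence of Theorems \ref{quad-map-of-special-2-group} and \ref{special-2-group-of-a-quad-map} as the main engine. In each direction the real content is matching the condition $|Z(G)| = 2$ on the group side with the condition $W = \F_2$ on the quadratic form side, together with making sure the hypothesis $\langle b_q(V \times V)\rangle = W$ of Theorem \ref{special-2-group-of-a-quad-map} is in force.

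For the forward direction, suppose $q : V \to \F_2$ is regular. First I would dispose of the trivial case $V = 0$ (which contradicts regularity in any useful sense and would anyway produce the trivial group), so we may assume $\dim V \geq 1$. Because $q$ is regular, the polar form $b_q$ is non-degenerate and alternating, and regular quadratic forms in characteristic $2$ are even-dimensional, so $\dim V \geq 2$. A non-degenerate alternating form on a space of dimension $\geq 2$ is not identically zero, so $b_q$ takes the value $1 \in \F_2$ somewhere, which gives $\langle b_q(V\times V)\rangle = \F_2$. Theorem \ref{special-2-group-of-a-quad-map} then produces a special $2$-group $G$ associated to $q$ with $Z(G) = \F_2$ and $G/Z(G) = V$. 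Since $|Z(G)| = 2$, the group $G$ is by definition extraspecial.

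For the converse, assume $G$ is an extraspecial $2$-group, so $|Z(G)| = 2$ and $Z(G) \cong \F_2$. By Theorem \ref{quad-map-of-special-2-group} the map $q : V \to \F_2$, where $V = G/Z(G)$, defined by $q(xZ(G)) = x^2$, is a quadratic map with polar form $b_q(xZ(G), yZ(G)) = xyx^{-1}y^{-1}$. To show $q$ is regular I need to verify that $\rad(V) = 0$. If $\bar x \in \rad(V)$, then $b_q(\bar x, \bar y) = [x,y] = 1$ for every $y \in G$, which means $x$ commutes with all of $G$, i.e. $x \in Z(G)$, i.e. $\bar x = 0$. Hence $b_q$ is non-degenerate and $q$ is a regular quadratic form, landing in $\F_2$ as required.

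I do not expect any serious obstacle: the whole argument is really just unwinding the identification $Z(G) \leftrightarrow W$ and noting that the non-degeneracy of the polar form is equivalent to the center being exactly $\{x : [x,y]=1 \text{ for all } y\}$, which is tautological. The one point that takes a moment of care is ensuring the hypothesis $\langle b_q(V\times V)\rangle = W$ needed to invoke Theorem \ref{special-2-group-of-a-quad-map}; this is where I use the characteristic $2$ fact that regular forms are even-dimensional to rule out the degenerate case where $b_q$ could be identically zero.
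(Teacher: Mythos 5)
Your proof is correct and follows the same route as the paper, which simply cites Theorems \ref{quad-map-of-special-2-group} and \ref{special-2-group-of-a-quad-map} and leaves the details implicit. You have merely filled in the two verifications the paper omits (that $\langle b_q(V\times V)\rangle=\F_2$ for a nonzero regular form, and that the radical of the associated form of an extraspecial group corresponds exactly to the center), both of which are exactly right.
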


\begin{proof}
Recall that a special $2$-group $G$ is called extraspecial $2$-group if $|Z(G)|=2$. Proof follows from Th. \ref{special-2-group-of-a-quad-map} and
Th. \ref{quad-map-of-special-2-group}.
\hfill $\square$
\end{proof}

% \begin{proposition}
% If $q:A\rightarrow F_{2}$ is a regular quadratic form then $\dim_{\F_2}(A)$ is even.
% \end{proposition}
% \begin{proof}
% Recall that every quadratic form can be written in the form 
% $[a_1,b_1]\oplus [a_2,b_2]\oplus ......\oplus [a_n,b_n]\oplus \langle c_1,c_2,.....c_m \rangle$. 
% If  $\dim_{\F_2}(A)$ is odd then the part $\langle c_1,c_2,.....c_m \rangle$ will not be 
% zero and quadratic form will not be reguler.
% \hfill $\square$
% \end{proof}

\begin{lemma}\label{order-of-extraspecial-group-is-odd-power-of-2}
The order of an extraspecial group is $2^{2n+1}$ for some $n\in \N$. 
\end{lemma}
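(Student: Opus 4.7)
The plan is to translate the statement into a statement about quadratic forms and then apply the structure theory developed in Section 1. By Lemma \ref{for-quad-form-extraspecial-group}, an extraspecial $2$-group $G$ corresponds (via Theorems \ref{quad-map-of-special-2-group} and \ref{special-2-group-of-a-quad-map}) to a regular quadratic form $q : V \to \F_2$ where $V = G/Z(G)$ and $Z(G) \cong \F_2$. Since $|G| = |V| \cdot |Z(G)| = 2 \cdot |V|$, it suffices to show that $\dim_{\F_2}(V)$ is even.

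The next step is to invoke the fact, already noted in the paper just after Theorem \ref{quad-form-decompo-U-and-rad(V)}, that every regular quadratic form over a field of characteristic two has even dimension. One sees this directly from the normalized form: since $q$ is regular, $\rad(V) = 0$, so in the decomposition of Theorem \ref{quad-form-decompo-U-and-rad(V)} the totally singular part $\langle c_1, \ldots, c_s\rangle$ is absent, and $q$ is an orthogonal sum of $2$-dimensional blocks $[a_i, b_i]$. Hence $\dim_{\F_2}(V) = 2n$ for some $n \in \N$.

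Combining these observations, $|V| = 2^{2n}$ and therefore $|G| = 2^{2n} \cdot 2 = 2^{2n+1}$. There is no real obstacle here; the argument is essentially a bookkeeping consequence of the correspondence between extraspecial $2$-groups and regular quadratic forms on $\F_2$-vector spaces together with the even-dimensionality of regular forms in characteristic $2$. The only thing worth being slightly careful about is that $n$ is allowed to be $0$ in principle, but an extraspecial $2$-group is by definition non-commutative so $V \neq 0$, forcing $n \geq 1$; this is not strictly demanded by the statement, which only asks for some $n \in \N$, but it is worth noting if $\N$ is taken to exclude $0$.
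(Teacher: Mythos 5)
Your proof is correct and follows essentially the same route as the paper: pass to the associated regular quadratic form $q : V \to \F_2$, use the even-dimensionality of regular forms in characteristic $2$, and count $|G| = |V|\cdot|Z(G)| = 2^{2n}\cdot 2$. The added observation that non-commutativity forces $n \geq 1$ is a nice touch but not needed for the statement.
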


\begin{proof}
Recall that a regular quadratic form over a field of characteristic two is even dimensional (see \S 2, Th. \ref{quad-form-decompo-U-and-rad(V)}). 
If $G$ is an extraspecial $2$-group and $q: V\rightarrow \F_2$ is the regular quadratic form associated to it as in Th. \ref{quad-map-of-special-2-group},
then $\dim_{\F_2}V=2n$ for some $n\in \N$. From \S 3, $G$ is in bijection with $V{\times}\F_2$. Hence $|G|=2^{2n}\times 2=2^{2n+1}$.   
\hfill $\square$\\
 \end{proof}

In the rest of this article, $D_4$ will denote the dihedral group of order $8$ and $Q_2$ will denote the quaternion group of order $8$.
Presentations of these groups are
\begin{center}
$D_4=\langle a,b~ : ~a^4=b^2=1,bab^{-1}=a^{-1}\rangle$\\
$Q_2=\langle c,d~ : ~c^4=1,d^2=c^2,dcd^{-1}=c^{-1}\rangle$
\end{center}

\begin{proposition}\label{regular-forms-correspond-to Dihedral-and-Quaternion}
Let $V$ be the two dimensional vector space over $\F_2$ and $q:V\rightarrow \F_{2}$ be a regular quadratic form.
Then the group associated to $q$ is either $Q_{2}$ and $D_{4}$.
\end{proposition}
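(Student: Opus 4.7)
The plan is to combine the classification of regular $2$-dimensional quadratic forms over $\F_2$ with the explicit reading of group relations from $q$ and $b_q$ furnished by Theorem \ref{quad-map-of-special-2-group}. By Theorem \ref{quad-form-decompo-U-and-rad(V)} every regular $2$-dimensional quadratic form over $\F_2$ has normalized form $[a,b]$, and a short case analysis (equivalently, isotropy, or the Arf invariant $ab \in \F_2/\wp(\F_2) = \F_2$) shows there are exactly two isometry classes: the hyperbolic plane $H = [0,0]$ and the anisotropic form $[1,1]$. Hence $q$ is isometric to one of these, and by the uniqueness part of Theorem \ref{special-2-group-of-a-quad-map} it suffices to identify the associated group in each case.

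For each class I build the group as $G = V \dot\times \F_2$. Pick a basis $\{e_1, e_2\}$ of $V$ realizing the normal form and set $x_i = (e_i, 0) \in G$. By Theorem \ref{quad-map-of-special-2-group}, relations in $G$ are completely determined by $x_i^2 = q(e_i) \in \F_2 = Z(G)$ and $[x_1, x_2] = b_q(e_1, e_2)$, so no explicit cocycle representing $[c] \in H^2(V, W)$ needs to be exhibited.

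When $q = [0,0]$, I have $q(e_1) = q(e_2) = 0$, $q(e_1+e_2) = 1$ and $b_q(e_1,e_2) = 1$; thus $x_1, x_2$ have order $2$ while $x_1 x_2$ has order $4$. Setting $a = x_1 x_2$, $b = x_1$, the relation $[a,b] = b_q(e_1+e_2, e_1) = 1 \in \F_2 = Z(G)$ translates, after inverting, to $bab^{-1} = a^{-1}$, yielding the presentation of $D_4$. When $q = [1,1]$, every nonzero $v \in V$ satisfies $q(v) = 1$, so each non-central element of $G$ has order $4$. Setting $c = x_1$, $d = x_2$, the squares give $c^2 = d^2$ (both equal the nontrivial central element), $c^4 = 1$, and $[d, c] = b_q(e_2, e_1) = 1$ gives $dcd^{-1} = c^{-1}$, which is the presentation of $Q_2$.

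The only genuine bookkeeping is the normalization step—confirming that over $\F_2$ there are exactly two isometry classes of regular $2$-dimensional forms and choosing a basis realizing each—which is immediate from Theorem \ref{quad-form-decompo-U-and-rad(V)}. Everything else reduces to routine algebra because Theorem \ref{quad-map-of-special-2-group} lets squares and commutators be read directly off $q$ and $b_q$.
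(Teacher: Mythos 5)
Your proof is correct, and its overall skeleton matches the paper's: first reduce to the two isometry classes $[0,0]$ and $[1,1]$ of regular $2$-dimensional forms over $\F_2$, then identify the associated group in each case. The identification step, however, is carried out differently. The paper works inside the explicit model $V\dot\times\F_2$ and defines concrete maps $\psi(a)=((1,1),1)$, $\psi(b)=((1,0),1)$, verifying the dihedral (resp.\ quaternion) relations by multiplying out with a cocycle $c_1$ representing $q$; this requires dragging along a cocycle representative (which the paper never actually writes down explicitly). You instead bypass the cocycle entirely by using Theorem \ref{quad-map-of-special-2-group} as a dictionary: squares of coset representatives are read off from $q$ and commutators from $b_q$, so the relations $a^4=b^2=1$, $bab^{-1}=a^{-1}$ (resp.\ $c^4=1$, $d^2=c^2$, $dcd^{-1}=c^{-1}$) follow from $q(e_i)$, $q(e_1+e_2)$ and $b_q(e_1,e_2)$ alone, and the order count $|G|=|V|\cdot|Z(G)|=8$ forces the presented group to map isomorphically onto $G$. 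Your route is cleaner and arguably more honest about what is being used; the one step you leave implicit is that $x_1,x_2$ generate $G$ (their images span $V=G/Z(G)$ and their commutator is the nontrivial central element, so the subgroup they generate contains $Z(G)$ and surjects onto $V$, hence is all of $G$) --- worth a sentence, but routine. Your justification of the two-class classification via isotropy or the Arf invariant is also slightly more complete than the paper's bare assertion.
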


\begin{proof}
Up to isometry there are only two regular $2$-dimensional quadratic forms over $\F_{2}$. These are 
$q_1 = [0,0]$ and $q_2 = [1,1]$ (see \S 2).
Recall that $q_{1}(x,y)=xy$ and $q_{2}(x,y)=x^{2}+xy+y^{2}$.
%Quadratic forms $[0,1]$ and $[1,0]$ are isometric to $[0,0]$ ).
We show that the extraspecial $2$-group corresponding to $[0,0]$ is $D_4$. 
From Th. \ref{special-2-group-of-a-quad-map} the group associated to $q_{1}:V \rightarrow \F_2$ is $V \dot{\times} \F_2$, where the multiplication is defined by
\begin{center}
 $(v,\alpha) (v^{\prime},\alpha^{\prime})=(v+v^{\prime}, c_1(v, v^{\prime})+\alpha+\alpha^{\prime})$
\end{center}
where $v,v^{\prime} \in V$, $\alpha,\alpha^{\prime}\in \F_2$ and $c_1\in Z^2(V,\F_2)$ is the normal $2$-cocycle
such that $q_1(v)=c_1(v,v)$ for all $v\in V$. \\

Define $\psi:D_4\rightarrow V \dot{\times} \F_2$ by $\psi(a)=((1,1),1)$ and $\psi(b)=((1,0),1)$, where $a$ and
$b$ are generating elements of $D_4$ as in the presentation of $D_4$. We check that $\psi$ is an
isomorphism of groups. Note that both $D_4$ and $V_{1}\dot{\times}\F_2$ are groups of order $8$.
It is easy to see that the orders of $((1,1),1)$ and $((1,0),1)$ are $4$ and $2$, respectively.
Moreover,
\begin{align*}
 ((1,0),1)((1,1),1)^{-1}&=((1,0),1)((1,1),c_1((1,1),(1,1))+1)\\
&= ((1,0),1)((1,1),q_1(1,1))+1)\\
&= ((1,0),1)((1,1),0)\\
&= ((1,0),c_1((1,0),(1,1))+1)\\
&= ((0,1),1)\\
&= ((1,1),1)((1,0),1)
\end{align*}
Hence $\psi$ is an isomorphism of groups.
On similar lines it can be shown that group associated with quadratic form $q_{2} = [1,1]$ is $Q_{2}$. The isomorphism is given by $\psi^{\prime}: Q_2\rightarrow V_{2}\dot{\times}\F_2$, where $\psi^{\prime}(c)=((1,1),1)$, $\psi^{\prime}(d)=((1,0),1)$ and $c$, $d$ denote generators of group $Q_2$ as in 
the given presentation of $Q_2$.
%\marginpar{\tt At least give the isomorphism map for $Q_2$.}
\hfill $\square$
\end{proof}

\begin{proposition}\label{[0,0]+[0,0]=[1,1]+[1,1]}
Let $q _{1}=[0,0]\bot [0,0]$ and $q_{2}=[1,1]\bot [1,1]$ be two quadratic forms over
$\F_2$. Then $q_1$ is isometric to $q_2$.
\end{proposition}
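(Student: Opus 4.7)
The plan is to establish the isometry via the Arf-invariant classification of regular quadratic forms over $\F_2$, together with the Witt decomposition recorded in \S1. First I would compute the Arf invariants. Using $\Arf([a,b]) = ab$ in $\F_2/\wp(\F_2)$ and the additivity of $\Arf$ on orthogonal sums, we have $\Arf(q_1) = 0+0 = 0$ and $\Arf(q_2) = 1+1 = 0$. Thus both forms agree in dimension and in Arf invariant.

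Next I would exhibit an isotropic vector for $q_2$. Taking a basis $\{e_1, f_1, e_2, f_2\}$ with $q_2(e_i)=q_2(f_i)=1$, $b_{q_2}(e_i,f_i)=1$ and the remaining $b_{q_2}$-values zero, the vector $v = e_1 + f_1 + e_2$ satisfies
$$q_2(v) = q_2(e_1)+q_2(f_1)+q_2(e_2)+b_{q_2}(e_1,f_1)+b_{q_2}(e_1,e_2)+b_{q_2}(f_1,e_2) = 1+1+1+1+0+0 = 0.$$
Applying Proposition \ref{witt-decomposition-in-char-2} to the regular form $q_2$, this isotropy forces off a hyperbolic summand: $q_2 \cong H \bot q'$ for some regular $2$-dimensional form $q'$.

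Additivity of $\Arf$ then gives $\Arf(q') = \Arf(q_2) + \Arf(H) = 0$ in $\F_2/\wp(\F_2)$. But regular $2$-dimensional quadratic forms over $\F_2$ are, up to isometry, exactly $H = [0,0]$ (with Arf $0$) and $[1,1]$ (with Arf $1$), so $q' \cong H$. Combining, $q_2 \cong H \bot H = q_1$, which is the claim.

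The two Arf computations and the verification that $v$ is isotropic are routine and present no real obstacle. The argument does rely on the classification of $2$-dimensional regular forms by $\Arf$, which is an immediate consequence of the normalized-form description in \S1 together with the observation that any isotropic regular $2$-dimensional form must be hyperbolic. An alternative, more constructive plan is to write down an explicit change of basis by setting $e_1' = e_1+e_2$ and $f_1' = f_1+e_2$, both of which are isotropic with $b_{q_2}(e_1',f_1')=1$, and then choosing a hyperbolic pair inside the orthogonal complement; this avoids invoking the $2$-dimensional classification but replaces it with a longer direct verification.
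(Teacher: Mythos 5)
Your argument is correct, but it takes a genuinely different route from the paper. The paper proves the isometry by brute force: it writes $q_1(w,x,y,z)=wx+yz$ and $q_2(w,x,y,z)=w^2+wx+x^2+y^2+yz+z^2$ and exhibits the explicit linear substitution $w\mapsto x+y+z$, $x\mapsto w+y+z$, $y\mapsto w+x+z$, $z\mapsto w+x+y$ (an invertible map over $\F_2$, since its matrix squares to the identity) carrying $q_1$ to $q_2$. You instead argue structurally: both forms are regular of dimension $4$ with $\Arf=0$; the isotropic vector $e_1+f_1+e_2$ forces, via the Witt decomposition of Prop.~\ref{witt-decomposition-in-char-2} (noting that a regular form has no totally singular or zero part), a splitting $q_2\cong H\bot q'$; additivity of $\Arf$ gives $\Arf(q')=0$; and the classification of regular $2$-dimensional forms over $\F_2$ into $[0,0]$ and $[1,1]$, distinguished by $\Arf$, then yields $q'\cong H$. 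All the ingredients you invoke are available in \S1 and in the paper's own discussion of $2$-dimensional forms, so the proof is complete as it stands. The trade-off is the usual one: the paper's computation is shorter and entirely self-contained but unexplanatory, while your argument explains \emph{why} the isometry must hold (regular forms over $\F_2$ are determined by dimension and Arf invariant) at the cost of leaning on the uniqueness statement in the Witt decomposition and the $2$-dimensional classification. Your suggested constructive variant with $e_1'=e_1+e_2$, $f_1'=f_1+e_2$ is also sound and is essentially a basis-by-basis reconstruction of the paper's change of variables.
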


\begin{proof}
 We have $q _{1}(w,x,y,z)=wx+yz$ and $q _{2}(w,x,y,z)=w^{2}+wx+x^{2}+y^{2}+yz+z^{2}$.
The following change of variables in $q_{1}$ converts it to the form $q_2$
\begin{align*}
w &\mapsto x+y+z \\
x &\mapsto w+y+z \\
y &\mapsto w+x+z \\
z &\mapsto w+x+y
\end{align*}
\hfill $\square$
\end{proof}

\begin{proposition} \label{D-4-o-D-4-is-isometric-to-Q-2-o-Q-2}
The group $D_4\circ D_4$ is isomorphic to $Q_2\circ Q_2$ where $\circ$ denotes the central product of groups. 
\end{proposition}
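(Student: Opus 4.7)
The plan is to string together the three preceding results and read off the conclusion from uniqueness of the group associated to a regular quadratic form.

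First I would identify the quadratic forms that correspond to the central products on each side. By Proposition \ref{regular-forms-correspond-to Dihedral-and-Quaternion}, the extraspecial group $D_4$ is the special $2$-group associated to the regular quadratic form $q_1 = [0,0]$ on a two-dimensional $\F_2$-space, while $Q_2$ is the group associated to $q_2 = [1,1]$. Applying Lemma \ref{ortho-sum-corresponds-to-central-product} with $W = \F_2$ in each factor, the central product $D_4 \circ D_4$ is the special $2$-group associated to $q_1 \perp q_1 = [0,0] \perp [0,0]$, and similarly $Q_2 \circ Q_2$ is the group associated to $q_2 \perp q_2 = [1,1] \perp [1,1]$.

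Next I would invoke Proposition \ref{[0,0]+[0,0]=[1,1]+[1,1]}, which gives the isometry $[0,0] \perp [0,0] \cong [1,1] \perp [1,1]$ via the explicit change of variables already produced. Finally, since Theorem \ref{special-2-group-of-a-quad-map} asserts that the special $2$-group attached to a regular quadratic map (with $W = \langle b_q(V \times V)\rangle$) is unique up to isomorphism, isometric quadratic forms yield isomorphic groups. Therefore $D_4 \circ D_4 \cong Q_2 \circ Q_2$.

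There is essentially no obstacle here beyond bookkeeping: one only has to verify that Lemma \ref{ortho-sum-corresponds-to-central-product} applies, which requires checking that both $D_4$ and $Q_2$ have the same center $\F_2$ (so that the central product in the sense defined in the paper makes sense with $\theta = \mathrm{id}$), and that the form $q_1 \perp q_1$ (respectively $q_2 \perp q_2$) is regular with image of its polar map generating $\F_2$. Both facts are immediate since $[0,0]$ and $[1,1]$ are themselves regular with nonzero polar form. The only mildly delicate point is to make sure the isometry from Proposition \ref{[0,0]+[0,0]=[1,1]+[1,1]} is transported correctly through the correspondence of Theorem \ref{special-2-group-of-a-quad-map}, but this is automatic since isometric quadratic maps give cohomologous cocycles under the isomorphism of Proposition \ref{cohomology-and-quadratic-map-correspondance}, hence isomorphic extensions.
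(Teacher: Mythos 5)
Your argument is correct and follows exactly the same route as the paper: identify the forms $[0,0]\perp[0,0]$ and $[1,1]\perp[1,1]$ via Lemma \ref{ortho-sum-corresponds-to-central-product} and Prop.\ \ref{regular-forms-correspond-to Dihedral-and-Quaternion}, invoke the isometry of Prop.\ \ref{[0,0]+[0,0]=[1,1]+[1,1]}, and conclude by the uniqueness in Th.\ \ref{special-2-group-of-a-quad-map}. The extra remarks you add about why the lemma applies are sound but not needed beyond what the paper already records.
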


\begin{proof}
From Lemma \ref{ortho-sum-corresponds-to-central-product} and Prop. \ref{regular-forms-correspond-to Dihedral-and-Quaternion}, it follows 
that the quadratic form associated to $D_4\circ D_4$ is $[0,0]\perp [0,0]$ and that the quadratic form
associated to $Q_2\circ Q_2$ is $[1,1]\perp [1,1]$. By Prop. \ref{[0,0]+[0,0]=[1,1]+[1,1]} we have that $q_1$ and $q_2$ are isometric
and now from Th. \ref{special-2-group-of-a-quad-map} the result follows.
\hfill $\square$
\end{proof}

\begin{theorem}\label{classification-of-exrtraspecial-2-groups}
For every $n\in \N$, there are exactly two extraspecial $2$-groups of order $2^{2n+1}$, namely
$D_{4}\circ D_{4}\circ \cdots \circ D_{4}$ ($n$ copies of $D_4$) and 
 $Q_{8}\circ D_{4}\circ \cdots\circ D_{4}$ ($n-1$ copies of $D_4$).
\end{theorem}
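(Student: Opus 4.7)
The plan is to reduce the classification of extraspecial $2$-groups of order $2^{2n+1}$ to the classification of regular quadratic forms on a $2n$-dimensional $\F_2$-vector space, carry out the latter, and translate back. By Lemma \ref{for-quad-form-extraspecial-group} and Lemma \ref{order-of-extraspecial-group-is-odd-power-of-2}, extraspecial $2$-groups of order $2^{2n+1}$ are in bijection with isometry classes of regular quadratic forms $q:V\to\F_2$ with $\dim_{\F_2}V=2n$, so this first reduction is immediate.

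To classify such forms I would first apply Th. \ref{quad-form-decompo-U-and-rad(V)}, which writes every regular $q$ as an orthogonal sum of $n$ regular $2$-dimensional forms. As noted in the proof of Prop. \ref{regular-forms-correspond-to Dihedral-and-Quaternion}, the only $2$-dimensional regular forms over $\F_2$ up to isometry are $[0,0]$ and $[1,1]$, so $q\cong k\cdot[0,0]\perp(n-k)\cdot[1,1]$ for some $0\le k\le n$. Using Prop. \ref{[0,0]+[0,0]=[1,1]+[1,1]} I would then exchange pairs of $[1,1]$ summands for pairs of $[0,0]$ summands, collapsing $q$ to one of $q_{+}:=n\cdot[0,0]$ or $q_{-}:=(n-1)\cdot[0,0]\perp[1,1]$ according to the parity of $n-k$. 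To verify that $q_{+}$ and $q_{-}$ are genuinely non-isometric I would compute the Arf invariant recalled in \S 1: $\Arf(q_{+})=0$ while $\Arf(q_{-})=1$ in $\F_2/\wp(\F_2)\cong\F_2$. This yields exactly two isometry classes on the quadratic side.

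Translating back via Prop. \ref{regular-forms-correspond-to Dihedral-and-Quaternion}, which identifies $D_4$ with $[0,0]$ and $Q_2$ with $[1,1]$, together with an iterated application of Lemma \ref{ortho-sum-corresponds-to-central-product}, $q_{+}$ corresponds to $D_4\circ\cdots\circ D_4$ ($n$ factors) and $q_{-}$ corresponds to $Q_2\circ D_4\circ\cdots\circ D_4$ (one copy of $Q_2$ and $n-1$ copies of $D_4$), which matches the statement.

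The main step demanding genuine input is the non-isometry $q_{+}\not\cong q_{-}$, handled by the Arf invariant as above. A minor bookkeeping matter is that Lemma \ref{ortho-sum-corresponds-to-central-product} is stated only for two summands, so one should either argue by induction on $n$ or note that both orthogonal sums and central products are associative up to isomorphism, making the $n$-fold central product well-defined independently of parenthesisation.
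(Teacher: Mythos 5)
Your proposal is correct and follows essentially the same route as the paper: decompose the regular form into two-dimensional regular summands, use $[0,0]\perp[0,0]\cong[1,1]\perp[1,1]$ to reduce to the two normal forms, and translate back to central products of $D_4$ and $Q_2$ via Prop.~\ref{regular-forms-correspond-to Dihedral-and-Quaternion} and Lemma~\ref{ortho-sum-corresponds-to-central-product}. The one point where you go beyond the paper's own proof is the explicit Arf-invariant computation showing $n\cdot[0,0]\not\cong(n-1)\cdot[0,0]\perp[1,1]$, which the paper leaves implicit but which is genuinely needed to conclude that there are \emph{exactly} two classes rather than at most two.
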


\begin{proof}
Let $G$ be an extraspecial $2$-group and $q : V \to \F_2$ be the associated regular quadratic form. 
Since $q$ is regular, $\dim_{\F_2}(V)$ is even, say $\dim_{\F_2}(V) = 2n$. Writing $q$ as orthogonal
sum of two dimensional regular spaces and using the isometry
$[0,0]\perp [0,0] \simeq [1,1] \perp [1,1]$ (see Prop. \ref{[0,0]+[0,0]=[1,1]+[1,1]}) we conclude that either
$q \simeq [0,0] \perp [0, 0] \perp \cdots \perp [0, 0]$ or $q \simeq [1,1] \perp\ [0,0] \perp \cdots [0, 0]$.
Thus, in view of Prop. \ref{regular-forms-correspond-to Dihedral-and-Quaternion}, the group corresponding to $q$ is either 
$D_{4}\circ D_{4}\circ \cdots \circ D_{4}$ ($n$ copies of $D_4$) or $Q_{2}\circ D_{4}\circ \cdots\circ D_{4}$ ($n-1$ copies of $D_4$).
\hfill $\square$ \\
 \end{proof}

This completes the classification of extraspecial $2$-groups. Their classification is also given in \cite{GorensteinBook} using group
theoretic methods. We learn from \cite{Wilson} that the classification of extraspecial $2$-group using quadratic forms is known.
However we do not know of any reference where it is mentioned in as much detail. \\

We shall denote the extraspecial 2-group $D_{4}\circ D_{4}\circ \cdots \circ D_{4}$ ($n$ copies of $D_4$) by 
$D_4^{(n)}$ and the extraspecial $2$-group $Q_{2}\circ D_{4}\circ \cdots\circ D_{4}$ ($n-1$ copies of $D_4$)
by  $Q_2D_4^{(n-1)}$. We now study strong reality of these groups.

\begin{lemma}\label{central-product-of-strongly-real-groups}
Central product of two strongly real groups is a strongly real group.
\end{lemma}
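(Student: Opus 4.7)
The plan is to exploit the fact that $G_1 \circ G_2$ is a quotient of $G_1 \times G_2$, solve the problem coordinate-wise in the direct product, and then push the solution through the quotient map. I would use the equivalent form of strong reality recorded in the introduction: a group $H$ is strongly real if and only if for every $x \in H$ there exists $y \in H$ with $y^2 = 1$ and $y^{-1} x y = x^{-1}$.

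First I would fix notation. Let $\theta : Z(G_1) \to Z(G_2)$ be the isomorphism defining the central product, set $N = \{(z_1, z_2) \in Z(G_1) \times Z(G_2) : \theta(z_1) z_2 = 1\}$, and write $\pi : G_1 \times G_2 \to (G_1 \times G_2)/N = G_1 \circ G_2$ for the quotient homomorphism. Given an arbitrary element $\bar{g} \in G_1 \circ G_2$, choose a lift $(g_1, g_2) \in G_1 \times G_2$ with $\pi(g_1, g_2) = \bar{g}$.

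Next I would invoke strong reality of each factor: there exist $y_i \in G_i$ with $y_i^2 = 1$ and $y_i^{-1} g_i y_i = g_i^{-1}$ for $i = 1, 2$. Put $y = (y_1, y_2) \in G_1 \times G_2$. Then $y^2 = (1,1)$, and a coordinate-wise computation gives $y^{-1}(g_1, g_2) y = (g_1^{-1}, g_2^{-1})$. Since $\pi$ is a surjective group homomorphism, the image $\bar{y} := \pi(y)$ satisfies $\bar{y}^2 = 1$ and $\bar{y}^{-1} \bar{g} \bar{y} = \bar{g}^{-1}$, which is exactly the condition needed to conclude that $\bar{g}$ is strongly real in $G_1 \circ G_2$.

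The main obstacle, to the extent there is one, is pure bookkeeping: one must verify that passing to the quotient preserves both the involutive nature of $y$ and the conjugation relation. Both are automatic, since the identity of $G_1 \times G_2$ lies in every subgroup (in particular in $N$) and $\pi$ respects products and inverses, so the argument amounts to assembling these observations cleanly and invoking the characterization of strong reality at the end.
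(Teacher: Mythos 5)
Your proof is correct and follows essentially the same route as the paper: the paper's proof simply notes that central products are quotients of direct products and that both direct products and quotients preserve strong reality, which is exactly the argument you spell out in detail via the lift $(g_1,g_2)$ and the involution $\bar{y}=\pi(y_1,y_2)$.
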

\begin{proof}
Central products are quotients of direct products. The direct product of two strongly real groups is a strongly real group.
Further, a quotient of strongly real group is a strongly real group. Hence the result follows.
\hfill $\square$ 
\end{proof}

\begin{proposition}\label{extraspecial-are-strongly-real}
All extraspecial 2-groups except $Q_{2}$ are strongly real.
\end{proposition}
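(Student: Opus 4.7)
The plan is to combine the classification of extraspecial $2$-groups (Th.\ \ref{classification-of-exrtraspecial-2-groups}) with Lemma \ref{central-product-of-strongly-real-groups} to reduce everything to two small base cases. By that classification, every extraspecial $2$-group is either $D_4^{(n)}$ or $Q_2 D_4^{(n-1)}$; writing $D_4^{(n)} = D_4 \circ D_4^{(n-1)}$ and, for $n \geq 2$, $Q_2 D_4^{(n-1)} = (Q_2 \circ D_4) \circ D_4^{(n-2)}$, an induction on $n$ via the central product lemma will dispose of both families once we know that $D_4$ and $Q_2 \circ D_4$ are themselves strongly real. The excluded case $n=1$ of the second family is exactly $Q_2$.

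Strong reality of $D_4$ is immediate by inspection: every rotation $a^k$ is conjugated to $a^{-k}$ by any reflection, and every reflection is already an involution. For $Q_2 \circ D_4$ I would use the criterion of Theorem \ref{strongly-real-criterion}. By Lemma \ref{ortho-sum-corresponds-to-central-product} combined with Prop.\ \ref{regular-forms-correspond-to Dihedral-and-Quaternion}, the associated quadratic form is $q = [1,1] \perp [0,0]$ on $V = \F_2^4$. Given $0 \neq v \in V$, the criterion asks for $a \neq v$ with $q(a) = q(a-v) = 0$, which in $\charac 2$ is equivalent to $q(a) = 0$ and $b_q(a,v) = q(v)$. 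If $q(v) = 0$, the choice $a = 0$ works at once. If $q(v) = 1$, decompose $v = v_1 + v_2$ along the two orthogonal planes and seek $a = a_1 + a_2$ satisfying $b_{q_1}(a_1,v_1) + b_{q_2}(a_2,v_2) = 1$ and $q_1(a_1) + q_2(a_2) = 0$; a short case split according to whether $v_2$ vanishes exploits the isotropic vectors of the hyperbolic plane $[0,0]$ and the nondegeneracy of $b_{q_2}$ to produce a valid $a_1,a_2$. The condition $a \neq v$ is automatic, since $b_q(v,v) = 0 \neq 1 = b_q(a,v)$.

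The main obstacle is this last step for $Q_2 \circ D_4$: one must verify that the single hyperbolic plane contributed by the $D_4$ factor always supplies enough isotropy to solve the system $q(a) = 0$, $b_q(a,v) = 1$ for every $v$ with $q(v) = 1$. This is precisely the step that fails for $Q_2$ in isolation, whose form $[1,1]$ is anisotropic, so the whole argument hinges on showing how adjoining one $[0,0]$ summand restores the property. Since $|V| = 16$ and there are only four vectors with $q(v) = 1$, if the structural case split turns out to be awkward the verification can be completed by a brief direct inspection.
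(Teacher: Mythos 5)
Your proposal is correct and follows essentially the same route as the paper: reduce via the central product lemma and the classification to the two base cases $D_4$ and $Q_2\circ D_4$, handle $D_4$ by inspection, and verify $Q_2\circ D_4$ through the criterion of Theorem \ref{strongly-real-criterion} applied to $[1,1]\perp[0,0]$ (the paper does this last step by an explicit table over all of $\F_2^4$, while your structural case split on $v_2$ is a valid and slightly slicker substitute). One small slip: the form $[1,1]\perp[0,0]$ has Arf invariant $1$ and hence $6$ zeros in $\F_2^4$, so there are $10$ vectors with $q(v)=1$, not four --- this does not affect your argument, only the size of the fallback check.
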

\begin{proof}
It is easy to verify that the Dihedral group $D_{4}$ is strongly real. By Lemma. \ref{central-product-of-strongly-real-groups}, the groups
$D_4^{(n)}$, $n \in \N$ are strongly real. To show that groups $Q_2D_4^{(n-1)}$ are strongly real, by repeated use of 
Lemma \ref{central-product-of-strongly-real-groups} it is enough to show that $Q_{2}\circ D_{4}$ is strongly real.
We shall obtain this from Th. \ref{strongly-real-criterion}.
The quadratic form associated to $Q_{2}\circ D_{4}$ is $q = [0,0] \perp [1,1]$. As a map $q: V\rightarrow \F_2$ is defined by
\begin{center}
$q(w,x,y,z)=w^2+wx+x^2+yz$
\end{center}
To show that $Q_{2}\circ D_{4}$ is strongly real using the criterion of Th. \ref{strongly-real-criterion}, 
for each $v \in V$ we have to exhibit some $a \in V$ such that 
$q(a) = q(a - v) = 0$. The following table demonstrates that it is indeed possible.
\begin{center}
\begin{tabular}{|c|c|}
\hline 
$v$ & $a$ \\
\hline 
$(0,0,0,1), (0,0,1,0), (1,1,1,1), (1,0,1,1), (0,1,1,1), (0,0,0,0)$ & $(0,0,0,0)$ \\
$(0,0,1,1)$ & $(0,0,0,1)$ \\
$(1,0,0,0),(0,1,0,0),(1,1,1,0),(1,1,0,1)$ & $(1,1,1,1)$ \\
$(0,1,1,0),(0,1,0,1),(1,1,0,0)$ & $(0,1,1,1)$ \\
$(1,0,1,0),(1,0,0,1)$ & $(1,0,1,1)$ \\
\hline
\end{tabular}
\end{center}
Therefore it follows that groups $Q_2D_4^{(n-1)}$ for $n \geq 2$ are strongly real. 
The only extraspecial $2$-group which is left out it $Q_2$, which is not strongly real. This is because in $Q_2$ there is only one element
of order $2$, which is central.
\hfill $\square$\\ 
\end{proof}

We know the group $Q_{2}$ is real (see, for example \cite{Rose}, p.304). Therefore Prop. \ref{extraspecial-are-strongly-real} 
gives that all extraspecial 2-groups are real.

\section{Examples}
In this section we obtain examples of special $2$-groups which are strongly real but not totally orthogonal, and vice-versa.
We first fix our notation in order to state a criterion for total orthogonality of special $2$-groups. \\

Let $q:V\rightarrow W$ be a quadratic map and $s\in \Hom_{\F_2}(W,\F_2)$. Then $s_*(q):=s\circ q:V\rightarrow \F_2$ is a quadratic form with
 polar form $b_{s_*(q)}:=s\circ b_q:V\times V\rightarrow \F_2$. The form $s_*(q)$ is called the {\it transfer of $q$ by $s$}. 
 If the image of the radical $\rad(b_{s_*(q)})$ under $s_*(q)$ vanishes then $s_*(q)$ induces a regular quadratic form 
 $q_s:V_s:=\frac{V}{\rad(b_{s_*(q)})}\rightarrow \F_2$ defined by
 $q_s(\epsilon_s(x))=s_*(q)(x)$ for every $x\in V$, where $\epsilon_s:V\rightarrow V_s$ is the canonical surjection.

\begin{theorem}[\cite{ObedPaper}, Theorem 3.5]\label{totally-orthogonal-criterion}
 Let $G$ be a special $2$-group with associated quadratic map $q:V\rightarrow W$. Then the following are equivalent.
 \begin{enumerate}
 \item[$i.$] The group $G$ is totally orthogonal.
 \item[$ii.$] For all non-zero $s\in \Hom_{\F_2}(W,\F_2)$ the Arf invariant $\Arf(q_s)$ is trivial.
 \end{enumerate}
 \end{theorem}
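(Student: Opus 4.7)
The plan is to compute the Frobenius--Schur indicator $\nu_2$ of each irreducible complex character of $G$ and identify it with $(-1)^{\Arf(q_s)}$, where $s$ denotes the central character. Recall that $G$ is totally orthogonal iff no irreducible character has $\nu_2=-1$, i.e.\ $\nu_2(\chi)\in\{0,+1\}$ for every irreducible $\chi$. Since $[G,G]=W$, any character with trivial central character factors through the elementary abelian quotient $V=G/W$, is therefore linear with values in $\{\pm 1\}$, and so is automatically orthogonal. It therefore suffices to analyze the irreducible characters whose restriction to $W$ is $\chi(w)=(-1)^{s(w)}\chi(1)$ for some nonzero $s\in\Hom_{\F_2}(W,\F_2)$.

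For fixed nonzero $s$, I would use projective representation theory of $V$ with cocycle $s\circ c$ (where $c$ is a normalized $2$-cocycle representing $G$) to describe the irreducible characters with central character $s$. The polar form $b_{s_*(q)}=s\circ b_q$ on $V$ has radical $R_s=\rad(b_{s_*(q)})$. Standard Heisenberg / Stone--von Neumann type arguments over $\F_2$ yield the following dichotomy: when $s_*(q)$ vanishes on $R_s$ (the condition for $q_s$ to be defined), each irreducible $\chi$ with central character $s$ has dimension $\chi(1)=2^r$, where $2r=\dim_{\F_2}(V_s)$; when $s_*(q)$ does not vanish on $R_s$, every such $\chi$ satisfies $\nu_2(\chi)=0$ and, in particular, $G$ fails to be real and so cannot be totally orthogonal.

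The heart of the proof is the evaluation of $\nu_2(\chi)$ in the first case. Since $g^2=q(\bar g)\in W$ for every $g\in G$ by Th.~\ref{quad-map-of-special-2-group}, one has $\chi(g^2)=(-1)^{s(q(\bar g))}\chi(1)$, giving
\[
\nu_2(\chi)=\frac{1}{|G|}\sum_{g\in G}\chi(g^2)=\frac{\chi(1)}{|V|}\sum_{v\in V}(-1)^{s_*(q)(v)}.
\]
The inner sum is a Gauss sum over $\F_2$. When $q_s$ is defined, writing $V=V_s\oplus R_s$ and evaluating on each regular two-dimensional summand $[0,0]$ or $[1,1]$ gives
\[
\sum_{v\in V}(-1)^{s_*(q)(v)}=|R_s|\cdot(-1)^{\Arf(q_s)}\cdot 2^r.
\]
Combined with $\chi(1)=2^r$ and $|V|=|R_s|\cdot 2^{2r}$, this yields $\nu_2(\chi)=(-1)^{\Arf(q_s)}$. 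Thus $\nu_2(\chi)\neq -1$ for every irreducible $\chi$ iff $\Arf(q_s)=0$ for every nonzero $s\in\Hom_{\F_2}(W,\F_2)$, which is the equivalence in the statement.

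The main obstacle is the representation-theoretic input of the second paragraph: one must justify both the dimension formula $\chi(1)=2^r$ and the vanishing $\nu_2(\chi)=0$ in the degenerate case where $R_s\neq 0$. This is really a statement about irreducibles of the quotient $G/\ker(s)$, a central extension of $V$ by $\F_2$; when $s_*(q)$ vanishes on $R_s$ this quotient splits (centrally) as an extraspecial group by an elementary abelian factor, whose representations are controlled by the extraspecial piece. Once this structural input is pinned down, the Gauss sum computation and the final bookkeeping are routine.
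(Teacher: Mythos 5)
Note first that the paper does not prove this theorem at all: it is imported verbatim as \cite[Th.~3.5]{ObedPaper}, so there is no in-paper argument to compare against. Your Frobenius--Schur indicator computation is, as far as I can tell, a correct and essentially self-contained proof of the cited result, and it is in the same spirit as the indicator computations the authors themselves carry out elsewhere (e.g.\ in their order-$128$ example). Two remarks. First, the ``main obstacle'' you flag in your last paragraph is not actually an obstacle: the identity
\[
\nu_2(\chi)=\frac{\chi(1)}{|V|}\sum_{v\in V}(-1)^{s_*(q)(v)}
\]
holds for \emph{every} irreducible $\chi$ with central character $(-1)^{s(\cdot)}$, with no structural input about $\chi$. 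In the degenerate case the sum over the radical $R_s$ of the linear functional $s_*(q)|_{R_s}\neq 0$ already kills the whole sum, giving $\nu_2(\chi)=0$ directly; and in the nondegenerate case the Gauss-sum evaluation gives $\nu_2(\chi)=\chi(1)2^{-r}(-1)^{\Arf(q_s)}$, which combined with $\nu_2(\chi)\in\{0,\pm1\}$ \emph{forces} $\chi(1)=2^r$ rather than requiring a Stone--von Neumann argument as a prerequisite. So the projective-representation machinery of your second paragraph can be deleted. Second, two small points need tightening: total orthogonality means every $\nu_2(\chi)=+1$ (not merely $\nu_2(\chi)\neq-1$), so for the forward implication you must observe that every nonzero $s$ does occur as the central character of some irreducible (Frobenius reciprocity applied to $W=Z(G)$), and for the converse you must address the degenerate case where $q_s$ is undefined --- there $\nu_2=0$, $G$ is not real, and condition $(ii)$ should be read as failing (this case genuinely occurs, e.g.\ for the map $q(x,y,z)=(x^2+xy+y^2,xz)$ in \S 2 with $s(w_1,w_2)=w_1+w_2$), a point the theorem statement as transcribed here glosses over.
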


If $G$ is an extraspecial $2$-group with associated quadratic form $q$ then $\Hom_{\F_2}(W,\F_2)$ consists only of identity map and
by Th. \ref{totally-orthogonal-criterion} the group $G$ is totally orthogonal if and only if $\Arf(q)$ is trivial. \\

For all $n\in \N$, extraspecial $2$-groups $D_4^{(n)}$ are totally orthogonal because the Arf invariant of the quadratic 
form $q= [0,0] \perp [0,0] \cdots \perp [0,0]$ associated with to the group $D_4^{(n)}$ is trivial. 
Extraspecial $2$-groups $Q_2 D_4^{(n-1)}$ are not totally orthogonal because the Arf invariant of associated the quadratic 
form $q=[1,1] \perp [0,0] \cdots \perp [0,0]$ is not trivial. \\

Therefore to sum up, all extraspecial $2$-groups $Q_2 D_4^{(n-1)}$, $n\geq 2$ are examples of strongly real groups 
which are not totally orthogonal. In fact these groups have exactly one symplectic representation which is of degree
$2^n$ (see \cite{GorensteinBook}). Further, the least order of a strongly real finite group which are not totally orthogonal
is $32$ and $Q_2 \circ D_4$ the only group of order $32$ with this property. We have checked it by computer algebra system
GAP \cite{GAP4}.

Next order in which an example of strongly real group with symplectic representations is found is $64$.

% \begin{proposition}(\cite{ObedPaper}, Th. 2.1) \label{real-criterion}
% Let $q:  V \to W$ be a regular quadratic map with $\langle b_q(V\times V)\rangle = W$ and $G$ be the 
% special $2$-group associated with $q$ such that $V = \frac{G}{Z(G)}$ and $W = Z(G)$.
% Then $G$ is real if and only if 
% for every $a\in V$ there exists $a^{\prime}\in V$ such that 
% $q(a^{\prime})=q(a+a^{\prime})$.
% \end{proposition}
 
\begin{example}
Let $V$ and $W$ be vector spaces over field $\F_2$ and $q(w,x,y,z)=(z^2+wx+wz+xy,wy)$ be a regular quadratic map from $V$ to $W$. We show that the special $2$-group associated to $q$ is strongly real but not totally orthogonal. Let $s: W\rightarrow \F_2$ be the linear map given by $s(w_1,w_2)=w_1+w_2$ for $(w_1,w_2)\in W$. Since 
$s_*(q) : V \to \F_2$ is a regular quadratic form given by $s_*(q)(w,x,y,z)=(z^2+wx+wz+xy+wy)$, the quadratic forms $s_*(q)$ and $q_s$ are same.
The following change of variables in $s_*(q)$ converts it to the form $[1,1]\perp[0,0]$:
\begin{align*}
w &\mapsto w+x+z \\
x &\mapsto x+y \\
y &\mapsto y+w \\
z &\mapsto y+z
\end{align*}
Arf Invariant of $[1,1]\perp[0,0]$ is not trivial. Hence by Th. \ref{totally-orthogonal-criterion} the special $2$-group $G$ associated with quadratic form $q$ is not totally orthogonal. Now Th. \ref{strongly-real-criterion} we show that the special $2$-group $G$ is strongly real. In following table we give $a\in V$ for every $v\in V$ so that criterion of Th. \ref{strongly-real-criterion} is satisfied.
 \begin{center}
\begin{tabular}{|c|c|}
	\hline
$v$ &  $a$      \\
	\hline
$(1,0,0,0),(0,1,0,0),(0,0,1,0),(1,0,0,1),(0,1,1,1)$   & $(0,0,0,0)$ \\
	
$(0,0,0,1),(1,1,0,0),(1,0,1,0),(1,1,1,1)$  & $(1,0,0,0)$ \\
        
$(0,1,1,0),(0,1,0,1)$ & $(0,0,1,0)$\\
	
$(0,0,1,1)$ &  $(0,1,0,0)$\\
	
$(1,1,1,0),(1,0,1,1),(1,1,0,1)$   & $(1,0,0,1)$ \\
	\hline
\end{tabular}  
\end{center}
This is the only special group of order $64$ which is strongly real and not totally orthogonal. Another group of order $64$ which
is strongly real and not totally orthogonal is $\mathcal G = \mu_2 \times( Q_2 \circ D_4)$, where $\mu_2$ is the group of order $2$.
The group $\mathcal G$ is not special.
We have checked using GAP \cite{GAP4} that $G$ and $\mathcal G$ are the only strongly real groups of order $64$ which are not totally
orthogonal. \\
\end{example}

We now give an example of a special $2$-group which is totally orthogonal but not strongly real. 
From Th. \ref{strongly-real-criterion} and Th. \ref{totally-orthogonal-criterion} it is enough to find a quadratic map
$q : V \to W$ between vector spaces over field $\F_2$ such that the Arf invariant $\Arf(q_s)$ is trivial for all non-zero $s\in \Hom_{\F_2}(W,\F_2)$
and for every nonzero $v\in V$ there exists $a\in V$ with $v \neq a$ and $q(a) = q(a - v) = 0$. We assert that such quadratic maps
indeed exist. One such example is the following. \\

\begin{example}
Let $V$ be a $4$-dimensional vector space over $\F_2$ with standard basis 
$$\{(1,0,0,0),(0,1,0,0),(0,0,1,0),(0,0,0,1)\}$$ 
and $W$ be a $3$-dimensional vector space over $\F_2$ with standard basis $$\{(1,0,0),(0,1,0),(0,0,1)\}.$$

\noindent Consider the quadratic form $q: V\rightarrow W$ given by
\begin{align} \label{qform-example}
q(w,x,y,z)=(wx+yz,wy,xy); \quad (w,x,y,z)\in V
\end{align}

\end{example}
\noindent The polar form associated with $q$ is $b_q : V \times V \to W$,
$$
b_q((w_1,x_1,y_1,z_1), (w_2,x_2,y_2,z_2))=(w_1x_2+x_1w_2+y_1z_2+z_1y_2,w_1y_2+y_1w_2,x_1y_2+y_1x_2)
$$
where $(w_1,x_1,y_1,z_1), (w_2,x_2,y_2,z_2)\in V$.

It is straightforward to check that $\rad(b_q)=0$ and $\langle b_q(V\times V)\rangle = W$. Hence by Th. \ref{special-2-group-of-a-quad-map}
there exists a unique special $2$-group $G$ such that $V=Z(G)$ and $\frac{G}{Z(G)} = W$. The order of this group is 
$|V| \times |W| = 128$. 
%Further, the quadratic form (\ref{qform-example})
%satisfies condition $ii$ of Prop. \ref{real-criterion} so the special $2$-group $G$ is real. 
We shall make explicit computations to show
that $G$ is not strongly real but it is totally orthogonal.

For strong reality take, for example, $v = (1,1,1,1) \in V$. Then we claim that for every $a \in V$ with $q(a) = 0$ we have $q(v - a) \neq 0$.
We first identify all $a \in V$ such that $q(a) = 0$. Let $a = (w,x,y,z) \in V$ be one such vector. Then
$q(w,x,y,z) = 0$ will imply
$$wx + yz = wy = xy = 0$$
If $y \neq 0$ then the above condition forces $x = w = z = 0$. This gives $a = (0, 0, 1, 0)$.
If $y = 0$ then to ensure the above condition one must have either $w = 0$ or $x = 0$. Therefore we conclude that
$a \in \{(0,0,0,0),(0,1,0,0), (1,0,0,0),(0,0,0,1),(0,1,0,1), (1,0,0,1)\}$.
For each of these possibilities for $a$ we compute $q(a-v)$.

\begin{center}
\begin{tabular}{|c|c|c|}
\hline 
$a$ & $a-v$ & $q(a-v)$ \\
\hline 
$(0,0,1,0)$ & $(1,1,0,1)$ & $(1,0,0)$ \\
$(0,0,0,0)$ & $(1,1,1,1)$ & $(0,1,1)$ \\
$(0,1,0,0)$ & $(1,0,1,1)$ & $(1,1,0)$ \\
$(1,0,0,0)$ & $(0,1,1,1)$ & $(1,0,1)$ \\
$(0,0,0,1)$ & $(1,1,1,0)$ & $(1,1,1)$ \\
$(0,1,0,1)$ & $(1,0,1,0)$ & $(0,1,0)$ \\
$(1,0,0,1)$ & $(0,1,1,0)$ & $(0,0,1)$ \\
\hline 
\end{tabular}\\
\end{center} 

This table confirms that for $v = (1,1,1,1)$ there is no $a \in V$ such that $q(a) = q(a-v) = 0$ and from Th. \ref{strongly-real-criterion}
we conclude that $G$ is not strongly real. \\

Now we show that the special $2$-group $G$ associated to the quadratic form $q$ as in (\ref{qform-example}) is totally orthogonal. Since $\dim_{\F_2}(W,\F_2)=3$,
there exist exactly $7$ non-zero $\F_2$-linear maps from $W$ to $\F_2$, which are following
$$s_n(x,y,z)=ix+jy+kz; \quad \quad(x,y,z)\in W, 1\leq n \leq 7,$$
where $n =  4i+2j+k$ is the binary expansion of $n \in \{1, 2, \cdots, 7\}$. We write various transfer maps of $q$

\begin{align*}
s_{1_*}(q)(w,x,y,z)&=xy\\
s_{2_*}(q)(w,x,y,z)&=wy\\
s_{3_*}(q)(w,x,y,z)&=wy+xy\\
s_{4_*}(q)(w,x,y,z)&=wx+yz\\
s_{5_*}(q)(w,x,y,z)&=wx+yz+xy\\
s_{6_*}(q)(w,x,y,z)&=wx+yz+wy\\
s_{7_*}(q)(w,x,y,z)&=wx+yz+wy+xy,
\end{align*}
where $(w,x,y,z)\in V$. By suitable linear changes of variables each of above quadratic forms is isometric to either $q_1:V\rightarrow \F_2$ defined by $q_1(w,x,y,z)=wy$
or $q_2:V\rightarrow \F_2$ defined by $q_1(w,x,y,z)=wx+yz$. \\

Now $\rad(b_{q_1}) = \langle (0,1,0,0),(0,0,0,1) \rangle$ and $\frac{V}{\rad(b_{q_1})} = \langle (1,0,0,0),(0,0,1,0) \rangle$ so $q_1$
induces regular quadratic form $q^{\prime}_1:\frac{V}{\rad(b_{q_1})}\rightarrow \F_2$ defined by $q^{\prime}_1(\alpha,\beta)=\alpha\beta$,
where $(\alpha,\beta)\in \frac{V}{\rad(b_{q_1})}$. Now $\Arf(q_1) = \Arf(q^{\prime}_1)=0$. 
On the similar lines, since $\rad(b_{q_2}) = 0$ the quadratic form $q_2$ is regular and $\Arf(q_2)=0$. \\

As a consequence, for all $s \in \Hom_{\F_2}(W,\F_2)$ the Arf invariant of the transfer $q_s$ is trivial and by Th. \ref{totally-orthogonal-criterion}
the group $G$ is totally orthogonal. \\

\remark We remark that the smallest order in which a totally orthogonal special $2$-group which is not strongly real exists is $128$. 
We have checked using GAP \cite{GAP4} that the smallest totally orthogonal group which is not strongly real is of order $64$. That group, though, is not
a special $2$-group.

\bibliographystyle{amsalpha}
\bibliography{references}
\end{document}